\theoremstyle{plain}
\newtheorem{theorem}{Theorem}
\newtheorem{proposition}[theorem]{Proposition}
\newtheorem{lemma}[theorem]{Lemma}
\newtheorem{corollary}[theorem]{Corollary}
\theoremstyle{remark}
\newtheorem{remark}[theorem]{Remark}
\newcommand{\biborder}[1]{}
\numberwithin{equation}{section}
\numberwithin{theorem}{section}
\newcommand{\be}%
  {\protect\setcounter{equation}{\value{subsubsection}}}  
\newcommand{\ee}%
  {\protect\setcounter{subsubsection}{\value{equation}}}
\DeclareMathAlphabet\BOONDOX{U}{rsfso}{m}{n}
\newcommand{\Mod}[1]{\ (\mathrm{mod}\ #1)}
\newcommand{\N}{{\mathbb N}}
\newcommand{\Q}{{\mathbb Q}}
\newcommand{\R}{{\mathbb R}}
\newcommand{\C}{{\mathbb C}}
\newcommand{\sel}{\BOONDOX{S}}
\newcommand{\bselp}{\BOONDOX{L}}
\newcommand{\esel}{{\sel}^{\#}}
\newcommand{\gselp}{\mathfrak{G}}
\newcommand{\gsel}{\gselp^{\#}}
\newcommand{\lselp}{\mathfrak{A}}
\newcommand{\lsel}{\lselp^{\#}}
\newcommand{\lseld}{\lsel_d}
\newcommand{\eff}{\mathcal F}
\newcommand{\aq}{{\mathbb A}_{\mathbb Q}}
\newcommand{\ak}{{\mathbb A}_K}
\newcommand{\glone}{{\rm GL}_1}
\newcommand{\gltwo}{{\rm GL}_2}
\newcommand{\glnaq}{{\rm GL}_n({\mathbb A}_{\mathbb Q})}
\newcommand{\glnak}{{\rm GL}_n(\ak)}
\newcommand{\glniak}{{\rm GL}_{n_i}(\ak)}
\newcommand{\res}{\rm{Re}(s)}
\newcommand{\re}{\text{Re}}
\begin{document}

%%%%%%%%%%%%%%%%%%%%%%%%%%%%%%%%%%%%%%%%%%%%%%%%%%%%%%%%%%%%%%%%%%%%%
%%%%%%%%%%%%%%%%%%%%%%%%%%%%%%%%%%%%%%%%%%%%%%%%%%%%%%%%%%%%%%%%%%%%%
%% TOPMATTER
%%%%%%%%%%%%%%%%%%%%%%%%%%%%%%%%%%%%%%%%%%%%%%%%%%%%%%%%%%%%%%%%%%%%%

\title{Beyond the extended Selberg class: $d_F\le 1$}
%%%%%
%%%%% Version of R. Raghunathan, August 06, 2017.
%%%%%
\author{Ravi ~Raghunathan}

\address{Department of Mathematics,
         Indian Institute of Technology Bombay,
         Powai, Mumbai,\enspace  400076, India}
\email{ravir@math.iitb.ac.in}
\subjclass[2010]{11M41,11F66}
\keywords{Selberg class, automorphic $L$-functions, converse theorems}

\begin{abstract} 
We will introduce two new classes of Dirichlet series which are
monoids under multiplication. The first class $\lsel$ contains
both the extended Selberg class $\esel$ of Kaczorowski and
Perelli as well as many
$L$-functions attached to automorphic representations of $\glnak$,
where $\ak$ denotes the ad\`eles over the number field $K$
(these representations need not be unitary or generic).
This is in contrast to the class $\esel$ which is smaller and is known to contain, very
few of these $L$-functions. 
The larger class is obtained by weakening
the requirement for absolute convergence, 
allowing a finite number of poles,
allowing more general gamma factors and by allowing 
the series to have trivial zeros to the right of $\res=1/2$, while retaining
the other axioms of the extended Selberg class.
We will classify series in $\lsel$ of degree $d$ when $d\le 1$
(when $d=1$, we will assume absolute convergence in $\res>1$).
We will further prove a primitivity result for the $L$-functions
of cuspidal eigenforms on $\gltwo(\aq)$ and a theorem allowing us to compare the zeros
of tensor product $L$-functions of $\glnak$ which cannot be deduced from 
previous classification results. The second class
$\gsel\subset\lsel$, which also contains $\esel$, more closely models the behaviour of 
$L$-functions of unitary globally generic representations of $\glnak$.

\end{abstract}
\vskip 0.2cm

\maketitle

%%    LEFT AND RIGHT RUNNING HEADS.  MUST COME *AFTER* \maketitle

\markboth{RAVI RAGHUNATHAN}{Classification of degree one elements}

%%%%%%%%%%%%%%%%%%%%%%%%%%%%%%%%%%%%%%%%%%%%%%%%%%%%%%%%%%%%%%%%%%%%%
%% BODY OF PAPER BEGINS HERE
%%%%%%%%%%%%%%%%%%%%%%%%%%%%%%%%%%%%%%%%%%%%%%%%%%%%%%%%%%%%%%%%%%%%%
%%%%%%%%%%%%%%%%%%%%%%%%%%%%%%%%%%%%%%%%%%%%%%%%%%%%%%%%%%%%%%%%%%%%%
%% INTRODUCTION
%%%%%%%%%%%%%%%%%%%%%%%%%%%%%%%%%%%%%%%%%%%%%%%%%%%%%%%%%%%%%%%%%%%%%

\section{Introduction}\label{introduction}
The purpose of this paper is to introduce two classes of Dirichlet series 
$\lsel$ and $\gsel$ and their arithmetic counterparts $\lselp$ and
$\gselp$ which we believe provide the correct setting for the study of the
analytic theory of automorphic $L$-functions. These classes are obtained
by weakening the hypotheses used to define the 
extended Selberg class $\esel$ studied by Kaczorowski and Perelli
in a series of foundational papers (see \cite{KaPe99,Kape02,Kape11}
among many others). Our aim is to prove many
of their most important results for our classes. The class $\lsel$
contains all the standard $L$-functions of automorphic representations of 
$GL_n$ over number fields as well as the symmetric square, exterior square and
tensor product $L$-functions, among others. 
Only some of these are even expected to belong to 
$\esel$, and very few of those have actually been proven to do so. 
Finally, the class $\lsel$ contains a number of series that are not part of
the class $\bselp$ defined by A. Booker in \cite{Booker2015} and also contains
Dirichlet series known to belong to $\bselp$.

In Theorems \ref{lselzero}, \ref{lselzerooneempty} and \ref{mainthm} of this paper we 
classify series in $\lsel$ and $\gsel$ of small degrees 
(the notion of degree will be defined below, just after we define the classes)
and present some other results generalising the work of Kaczorowski and
Perelli and Booker. We will apply our results to establish the primitivity of $L$-functions
attached to Maass cusp forms in Theorem \ref{gltwoprimitive} and to compare the zero sets of pairs 
of $L$-functions associated to (the tensor products of) representations
of $\glnak$ in Theorem \ref{zerothm}. The latter result improves on the results of Booker.

Our first task is to define our new classes of Dirichlet series below,
deferring a more detailed examination of our motivations to the next section.

Let $F(s)$ be a non-zero meromorphic function on $\C$. 
We consider the following conditions on $F(s)$.
\begin{enumerate}
\item [(P1)] The function $F(s)$ is given by a
Dirichlet series $\sum_{n=1}^{\infty}\frac{a_n}{n^{s}}$ 
with abscissa of absolute convergence $\nu\ge 1/2$.
\item[(P2)] There is a polynomial $P(s)$ such that the function $P(s)F(s)$ extends to
an entire function. 
\item[(P3)] There exist a real number $Q>0$, a complex number $\omega$ such that 
$\vert\omega\vert=1$, and a function $G(s)$ of the form
\begin{equation}\label{gammafactor}
G(s)=\prod_{j=1}^{r}\Gamma(\lambda_j s+\mu_j)
\prod_{j^{\prime}=1}^{r^{\prime}}\Gamma(\lambda_{j^{\prime}}^{\prime} s+\mu_{j^{\prime}}^{\prime})^{-1},
\end{equation}
where $\lambda_j,\lambda_{j^{\prime}}^{\prime}>0$ and $\mu_j,\mu_{j^{\prime}}^{\prime}\in \C$, such that
\begin{equation}\label{fnaleqn}
\Phi(s):=Q^{s}G(s)F(s)=\omega\overline{\Phi(1-\bar{s})}.
\end{equation}
\item[(P4)] The function $F(s)$ can be expressed as a product
$F(s)=\prod_p F_p(s)$, where 
\begin{equation}\label{prod}
\log F_p(s)=\sum_{k=1}^{\infty} \frac{b_{p^k}}{p^{ks}}
\end{equation}
with $\vert b_{p^k}\vert \le Cp^{k\theta}$ for some $\theta>0$ and some
constant $C>0$.
\end{enumerate}

We will denote by $\lsel$ the class of Dirichlet series  satisfying (P1)-(P3). 
The class of series satisfying (P1)-(P4) will be denoted
$\lselp$. We note that the multiplication of Dirichlet series gives $\lsel$ the structure of a 
monoid and $\lselp$ the structure of a submonoid.
The class of series $\lsel(\nu_0)$ is defined as the those series in $\lsel$ with abscissa of
absolute convergence $\nu$ for some $\nu\le \nu_0$. We see that $\lsel(\nu_0)$ also forms a submonoid
of $\lsel$. 

Given $F(s)$ satisfying an equation of the form \eqref{fnaleqn}, we define
the degree of $F(s)$ to be
$d_F=2(\sum_{j=1}^{r}\lambda_j-\sum_{j^{\prime}=1}^{r^{\prime}}\lambda_{j^{\prime}}^{\prime})$.
We see (in Theorem \ref{uniquefnaleqn}) that this notion is well defined, that is,
it does not change if we take some other functional equation of the form \eqref{fnaleqn}
satisfied by $F(s)$. This justifies the notation $d_F$.

\begin{remark} We can actually allow
an even more general functional equation than \eqref{fnaleqn}. 
With all the other notation being the same as in (P3), we define
$\Psi(s)=G(s)B(s)$, where $B(s)$ is a Dirichlet series convergent in some
right half-plane. If we assume 
a weaker functional equation $\Phi(s)=Q^s\Psi(1-s)$ (the constant $\omega$ can
be absorbed into $B(s)$), it  can be checked that the theorems and the proofs in this paper go 
through in this generality with minor modifications. We avoid doing this for notational simplicity. 
\end{remark}
We may strengthen the condition (P3) to a stronger condition (P3') by further requiring
\begin{equation}\label{gselbound}
\re\left(-\frac{\mu_j}{\lambda_j}\right), 
\re\left(-\frac{\mu_{j^{\prime}}^{\prime}}{\lambda_{j^{\prime}}^{\prime}}\right)<\frac{1}{2},
\,\,1\le j\le r,\,\,1\le j^{\prime}\le r^{\prime}.
\end{equation}
We denote the class of series in $\lsel(1)$ satisfying (P3') 
by $\gsel$. When $\theta<1/2$ in (P4), we obtain the
new condition (P4'), and the class of series satisfying this additional hypothesis
will be denoted $\gselp$.
Occasionally, we may work with the hybrid class 
$\lselp^{\infty}$ satisfying (P1), (P2) and (P3') or the hybrid class 
$\lselp^{\rm{f}}$ satisfying (P1), (P2), (P3) and (P4').

We recall that if we assume $F(s)$ satisfies (P1) with abscissa of convergence 
$\nu\le 1$, (P2) for $P(s)=(s-1)^m$ for some $m\ge 0$, 
and (P3) with $r^{\prime}=0$, and the even stronger requirement that
\begin{equation}\label{selbound}
{\re}(\mu_j)\ge 0,\quad 1\le j\le r,
\end{equation}
we obtain the extended Selberg class $\esel$. We denote
this stronger version of (P3) by (P3''). This class in turn contains the 
original Selberg class $\sel$ introduced by Selberg which consists of all
series in $\esel$ satisfying (P4'). 

We recall that Booker has introduced a class $\bselp$ of Dirichlet series 
\cite{Booker2015} similar to the Selberg class. In the next section we 
explain how our classes contrast with $\esel$ and $\bselp$.

\subsection*{Acknowledgements:} The author would like to thank D. Surya Ramana and R. Balasubramanian 
for helpful discussions related to this paper.

\section{Motivation for introducing $\lsel$ and $\gsel$}\label{motivationsec}

From the definitions we have made, it is obvious that
\[
\esel\subset\gsel\subset\lselp^{\infty}\subset\lsel\quad\text{and}\quad \sel\subset\gselp\subset \lselp^{\rm{f}}\subset\lselp
\]
The passage from the extended Selberg class to the class $\lsel$ involves 
relaxing four hypotheses. We will analyse these hypotheses in turn. In what follows
we will use the notation $K$ for a number field and $\ak$ for its
ring of ad\`eles over $K$. We let $\pi$ and $\pi_i$, $i=1,2$ denote unitary cuspidal 
automorphic representations of $\glnak$ and $\glniak$ respectively, while $\sigma$ will denote an automorphic 
(not necessarily cuspidal or unitary) representation of $\glnak$.

\subsection{Allowing an arbitrary abscissa of absolute convergence}
The class $\lsel$ admits series with an arbitrary abscissa of 
convergence $\nu_0$. This yields a much larger class of $L$-functions than $\esel$.
Let $\sigma$ be as above and denote by $L(s,\sigma)$ the standard 
$L$-function associated to $\sigma$. It is known 
by \cite{GoJa72} that these $L$-functions lie in $\lsel$ (and, in fact in 
$\lselp$). However, even 
in some of the simplest cases, and even when the representation $\sigma$ is 
unitary, $L(s,\sigma)$ does not always belong to $\esel$. For instance, the 
$L$-function $\zeta(s-1/2)\zeta(s+1/2)$, which is attached to the trivial 
representation of $\gltwo(\ak)$, does not lie in $\esel$. This 
representation is not {\it globally generic}. Indeed non-generic automorphic
representations provide a large class of $L$-functions that belong
to $\lsel$ but not to $\esel$. The $L$-functions attached to Siegel modular forms
provide further examples of such series.

The difference between the submonoid $\lsel(1)$ and $\esel$ is less obvious,
and indeed, it is conceivable that these classes coincide. Nonetheless, there
are many examples of series that are known to lie in $\lsel(1)$ but have not been
proven to belong to $\esel$ as we discuss below. These examples include the symmetric and exterior square 
$L$-functions $L(s,\vee^2(\pi))$ and $L(s,\wedge^2(\pi))$, and the 
Rankin-Selberg $L$-function $L(s,\pi_1\times\pi_2)$.  One would expect all $L$-functions
associated to generic representations of $\glnak$ to lie in $\lsel(1)$. This is the
Generalised Ramanujan Conjecture which we discuss in Subsection 
\ref{archbounds}.

\subsection{Allowing a finite number of poles} Our condition (P2) allows
the polynomial $P(s)$ to be arbitrary, in contrast to the requirement that
$P(s)=(s-1)^m$ for some integer $m\ge 0$ for functions in $\esel$. This
allows our series to have a finite number of poles, a priori of arbitrary 
orders and locations. Indeed, general automorphic $L$-functions can and do 
have a finite number of poles - the example of $\zeta(s-1/2)\zeta(s+1/2)$
is once again instructive. Under suitable normalisations, one can expect
that the $L$-functions associated to globally 
generic unitary automorphic representations have poles only at $s=1$, but 
there remain cases (using the Langlands-Shahidi method, see \cite{Shahidi88}) where the 
finiteness of poles has been established without the stronger expectation 
having been proved. Even when it has been achieved, the passage from the 
finiteness of poles to holomorphy when $s\ne 1$ has not come easily .

\subsection{Bounds at the archimedean factors.}\label{archbounds}\label{archbounds}
The condition (P3") is the assertion that all the 
trivial zeros of $F(s)$ lie in the left half-plane. {\it Conjecturally}, 
the standard $L$-functions $L(s,\pi)$ satisfy this condition, as do the $L$-functions 
$L(s,\vee^2(\pi))$, $L(s,\wedge^2(\pi))$ and $L(s,\pi_1\times\pi_2)$. But proving this is equivalent to proving the 
Generalised Ramanujan Conjectures at infinity, 
which even in the simplest case of representations associated to Maass cusp forms is the
deep and unproven Selberg Eigenvalue Conjecture. Thus, the class
$\esel$ currently excludes many of the $L$-functions of the greatest interest, given
our current state of knowledge. 

It is possible that an element in $\lsel$ satisfies more than one functional equation
and (P3') would appear to be the weakest assumption we can impose to ensure 
that the factor $G(s)$ is unique (upto multiplication by a scalar).
This motivates, in part, our consideration of the class $\gsel$. 

The class $\gsel$ (and, in fact, the class $\gselp$) 
contains the standard $L$-function $L(s,\sigma)$ when $\sigma$ is a
unitary automorphic representation, since these are known to satisfy (P3') by a theorem of Jacquet 
and Shalika (see \cite{JaSh811} and \cite{JaSh812}). These include the the $L$-functions
of Maass cuspidal eigenforms, for instance.
However, even when both representations are globally generic, the functions $L(s,\pi_1\times\pi_2)$ 
are only conjecturally in $\gselp$. 
The class $\gsel$ is perhaps the largest class of series for which one may expect 
the Generalised Ramanujan Conjectures to hold. 

\subsection{Allowing $r^{\prime}>0$}\label{rprime}
For $L$-functiions in $\esel$, and for the $L$-functions of automorphic 
representations, we always have $r^{\prime}=0$. The motivation for allowing $r^{\prime}\ge 1$,
arises in the following setting. Let $F_j(s)$, $j=1,2$, $F_1(s)\ne F_2(s)$ be elements of $\lselp(1)$. 
Consider the quotient 
$F(s)=F_1(s)/F_2(s)$, and suppose that $d_{F_1}-d_{F_2}=1$. If we assume that this quotient
has at most finitely many poles, then under certain further assumptions we can 
use Theorem \ref{mainthm} to get a contradiction. 
Hence, in a number of situations we will be able to prove that the zero sets of 
distinct $L$-functions are very different - that there are infinitely many zeros (counted with multiplicity) 
of $F_2(s)$ which are not zeros of $F_1(s)$.
This idea was first pursued in special cases (when $r^{\prime}=0$) 
in \cite{Ragh99} and has been generalised by Booker to his class $\bselp$
in \cite{Booker2015}. Theorem \ref{zerothm} of this paper further
generalises Booker's theorem to our class $\lselp(1)$. Thus, the motivation for 
weakening this hypothesis comes partly from having a specific application in mind.
Conjecturally, of course,
we believe that any element of $\lsel$ will satisfy a functional equation 
in which $r^{\prime}=0$. 

\subsection{The Euler product axioms} The weakening of (P4') to (P4) allows the presence of non-trivial
degree $0$ elements which is often a minor irritant, especially when one is
interested in the factorisation of series. For this reason, we prefer to
retain the condition $\theta<1/2$ and work with the class $\gselp$ in this paper 
when discussing the primitivity of elements in these monoids. We may also work
in $\lselp^{\rm{f}}$, when the condition at infinity is not known
to hold or is not required for the proofs.

The condition (P3') is exactly the archimedean analogue of the condition $\theta<1/2$ 
in (P4') and is known to hold $L(s,\pi)$ by \cite{JaSh811,JaSh812}.
Thus, in the class $\gselp$, the bounds for local 
parameters at both the archimedean and nonarchimedean places are assumed to be 
less than $1/2$.
From a purely aesthetic point of view,
the imposition of the same Jacquet-Shalika type bound at both archimedean and non-archimedean places 
seems to make $\gselp$ a natural subclass in the theory. Further, 
$\gselp\subset\lselp^f\cap\lselp^{\infty}$ so theorems proved for these {\it hybrid classes}
remain valid for $\gselp$.

Finally, we must add that analogues of (various versions of) the Generalised Ramanujan Conjectures would assert that
the classes $\sel$, $\gselp$ and $\lselp^{\rm{f}}$, coincide, while the most general {\it Converse Theorem}
would assert that $\lselp$ coincides with the class of standard automorphic $L$-functions of $\glnaq$, $n\ge 1$. In addition, one 
could also conjecture $\esel=\gsel=\lsel(1)$. Until these are proven however, it is obviously best to work in the largest
possible class.

\subsection{A comparison with the class $\bselp$} 
In \cite{Booker2015}, Booker introduces what he calls an $L$-datum using the 
framework of explicit formulas with which he associates a Dirichlet
series. This class of Dirichlet series forms a monoid under multiplication. 
We will not describe his hypotheses in detail but make the following observations. 
A requirement for a series to belong to the class $\bselp$ is that the mean square of the coefficients 
$a_n$ satisfies the Ramanujan bound on average. This is stronger than the average 
bound ($\nu=1$) that we have assumed for the class $\gsel$.
For the standard $L$-function $L(s,\pi)$ this stronger bound follows
from the papers of Jacquet and Shalika referenced above. 
However, as in Subsection \ref{archbounds}, the
$L$-functions $L(s,\vee^2(\pi))$ and $L(s,\wedge^2(\pi))$ and $L(s,\pi_1\times\pi_2)$ have 
not been proven to lie 
in $\bselp$. 

Although Booker does not assume the existence of an Euler product explicitly, 
his series are defined as exponentials of other series, and thus satisfy a 
non-vanishing condition, implicitly
invoking a condition very close to that of the existence of an Euler product. Thus our 
class $\gsel$ is not contained in 
$\bselp$, since we may take linear combinations of elements in $\gselp$ with the 
same gamma factors to produce elements in $\gsel$ which are not in $\bselp$.
In light of the Generalised Ramanujan Conjecture at infinity, one 
might expect the class $\bselp$ to coincide with our class $\gselp$. However, 
the uniqueness of the functional equation for elements in 
$\bselp$ does not seem to follow immediately.

{\it A priori}, the kernels in an $L$-datum allow for more general gamma factors 
at infinity than the quotients we allow for the class $\lsel$. It seems likely that
any Dirichlet series in $\bselp$ will satisfy a functional equation with the simpler 
quotients of gamma factors that arise in $\lsel$, probably with $r^{\prime}=0$.
Perhaps establishing a suitable local
functional equation will allow the passage from the more general condition
to the more restrictive one.

To summarise, one expects that $\bselp=\gselp$. There are no known elements
of $\bselp$ which do not lie in $\gselp$. However, there are many elements of
$\lselp(1)$ which have not been proven to belong to $\bselp$.
Finally, not imposing the condition $\nu_0=1$ allows us to include the naturally occurring 
non-generic automoprhic $L$-functions in the class $\lselp$, and these do not 
belong to $\bselp$. Linear combinations of series in $\gselp$ and $\lselp$ will
produce series in $\gsel$ and $\lsel$ which will also not belong to $\bselp$.
\subsection{Nomenclature} The notation $\lsel$ is supposed to be suggestive 
of the the word ``automorphic", 
while $\gsel$ suggests ``generic" for the generic automorphic representations that are expected 
to be the source of all functions in this class. The Ramanujan conjecture is 
roughly the statement that globally generic representations should
be tempered. The condition $\nu_0=1$ is consistent with this expectation.

\section{Preliminaries}\label{prelim}
In the rest of this paper, for any function $a:\C\to \C$, we will use the notation $\tilde{a}(s):=\overline{a(\bar{s})}$.

We will require two different variants of Stirling's formula applied to the quotients of the gamma functions
appearing in the functional equation. Let $w=u+iv$ be in $\C$. The simplest avatar of the 
formula that we will use is 
\begin{equation}\label{zerothgammaest}
\frac{\tilde{G}(1/2-it-w)}{G(1/2+it+w)}\ll (1+|t+v|+|u|)^{-u},
\end{equation}
where $G(z)$ is the function appearing in \eqref{gammafactor}.
A more refined version of Stirling's formula gives 
\begin{flalign}\label{thirdgammaest}
\frac{\Gamma\left(\lambda_k(1/2-it) +\bar{\mu}_k\right)}{\Gamma\left(\lambda_k(1/2+it) +\mu_k\right)}
\sim & e^{-2\lambda_kit\log\frac{t}{2e}-2\lambda_kit(\log\lambda_k+1)+(\bar{\mu}_k-\mu_k)\log t}
\nonumber\\
&\times e^{(\bar{\mu}_k-\mu_k)\log\lambda_k-(\mu_k+\bar{\mu}_k+\lambda_k)i\frac{\pi}{2}+(\mu_k-\bar{\mu}_k)}
\cdot\left[1+O(1/t)\right],
\end{flalign}
for $1\le k=j\le r$. The same formula holds for $1\le k=j^{\prime}\le r^{\prime}$ when 
$\lambda_k$ and $\mu_k$ are replaced by $\lambda_{k^{\prime}}^{\prime}$ and 
$\mu_{k^{\prime}}^{\prime}$ respectively. Taking the product 
over all $j,j^{\prime}$ in \eqref{gammafactor} yields
\begin{equation}\label{fourthgammaest}
\frac{\tilde{G}(1/2-it)}{G(1/2+it)}
\sim e^{-it\log\frac{t}{2e}}t^{iA}e^{iB}C^{-it}\cdot 
\left[1+O(1/t)\right],
\end{equation}
where 
\[
A=-i[(\bar{\mu}-\mu)-(\overline{\mu^{\prime}}-\mu^{\prime})],C=e\prod_{j,j^{\prime}=1}^{r,r^{\prime}}{\lambda_j}^{2\lambda_j}
{\lambda_{j^{\prime}}^{\prime}}^{-2\lambda_{j^{\prime}}^{\prime}}
\]
and
\begin{flalign}
B=&-i\left[\sum_{j=1}^r(\bar{\mu}_j-\mu_j)\log\lambda_j-\sum_{j^{\prime}=1}^r(\overline{\mu_{j^{\prime}}^{\prime}}-\mu_{j^{\prime}}^{\prime})\log\lambda_{j,}\right]\nonumber\\
&-(\mu-\bar{\mu})+(\mu^{\prime}-\overline{\mu^{\prime}})-((\mu-\bar{\mu})-(\mu^{\prime}-\overline{\mu^{\prime}})+1)\frac{\pi}{2},
\end{flalign}
where 
\[
\mu=\sum_{j=1}^{r}\mu_j\quad\text{and}\quad \mu^{\prime}=\sum_{j^{\prime}=1}^{r^{\prime}}\mu_{j^{\prime}}^{\prime}.
\]
Note that $A\in \R$ and $C>0$.

In addition, we will also require the following lemma which expresses the function
$F(s)$ as a convergent Dirichlet series at $s=1/2+it$ plus two error terms over which
one has relatively good control.
\begin{lemma}\label{basiclemma} Let $w=u+iv$ and $0<\eta<1$, and assume
that $F(s)\in \lsel_1(\nu_0)$.
We have
\begin{equation}\label{basiclemmaeqn}
F(1/2+it)=\sum_{n=1}^{\infty}\frac{a_ne^{-n/X}}{n^{1/2+it}}
+r_1+r_2
\end{equation}
where $r_1:=r_1(t,X)=O(X^{\nu_0-1/2}e^{-|t|})$ is
identically zero if $F(s)$ is entire and
\begin{equation}\label{rtwoz}
r_2=\frac{1}{2\pi i}\int_{u=-1+\eta}F(1/2+it+w)X^{w}\Gamma(w) dw\ll
O((1+|t|)^{1-\eta}X^{-1+\eta}),
\end{equation}
and where $u=-1+\eta$ is a line on which none of the poles of $F(s)$ lie. 
\end{lemma}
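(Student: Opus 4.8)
The plan is to start from the Mellin transform of $e^{-x}$, namely $\frac{1}{2\pi i}\int_{(c)}X^w\Gamma(w)\,dw = e^{-X^{-1}}$ type identity, applied termwise to the Dirichlet series. Concretely, for $\re(s)=1/2$ and a vertical contour $\re(w)=c$ with $c$ large enough (say $c>\nu_0$) so that the series $\sum a_n n^{-(s+w)}$ converges absolutely and we may interchange summation and integration, I would write
\begin{equation}\label{mellinstart}
\frac{1}{2\pi i}\int_{\re(w)=c}F(s+w)X^w\Gamma(w)\,dw
=\sum_{n=1}^{\infty}\frac{a_n}{n^{s}}\cdot\frac{1}{2\pi i}\int_{\re(w)=c}\left(\frac{X}{n}\right)^w\Gamma(w)\,dw
=\sum_{n=1}^{\infty}\frac{a_n e^{-n/X}}{n^{s}},
\end{equation}
using the classical inverse Mellin transform $\frac{1}{2\pi i}\int_{(c)}y^{-w}\Gamma(w)\,dw=e^{-y}$ with $y=n/X$. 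This identifies the main term on the far-right contour. The whole content of the lemma is then to shift the contour from $\re(w)=c$ leftward to $\re(w)=-1+\eta$ and account for the residues crossed.

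Next I would carry out the contour shift. Between $\re(w)=c$ and $\re(w)=-1+\eta$ the integrand $F(s+w)X^w\Gamma(w)$ has poles coming from two sources: the simple pole of $\Gamma(w)$ at $w=0$ (with residue $1$), which contributes $F(s)\cdot X^0\cdot 1=F(1/2+it)$ and is precisely the left-hand side of \eqref{basiclemmaeqn}; and the poles of $F(s+w)$, which by (P2) occur where $P(s+w)$ vanishes, i.e. at the finitely many poles of $F$. Writing $s=1/2+it$, these poles of $F$ all lie in a bounded region, and their contribution is the term $r_1$. The line $\re(w)=-1+\eta$ is chosen precisely so that no pole of $F(s+w)$ sits on it. The residue bound $r_1=O(X^{\nu_0-1/2}e^{-|t|})$ should come from estimating $X^{w}\Gamma(w)$ at the relevant residue points: the factor $X^w$ gives at most $X^{\nu_0-1/2}$ (since the poles of $F$ lie in $\re\le \nu_0$, and the shift $\re(w)=\re(\text{pole})-1/2$), while the rapid decay $e^{-|t|}$ is the decay of $\Gamma(w)$ evaluated at $w$ whose imaginary part is forced to be near $-t$ by the location of the poles of $F$ relative to $s=1/2+it$; when $F$ is entire there are no such poles and $r_1\equiv 0$. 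After accounting for these residues, the remaining integral along $\re(w)=-1+\eta$ is exactly $r_2$ as defined in \eqref{rtwoz}, establishing \eqref{basiclemmaeqn} as an identity.

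To justify the shift rigorously I would bound the integral over the connecting horizontal segments $\im(w)=\pm T$ as $T\to\infty$ and show it vanishes. Here I would invoke the vertical decay of $\Gamma(w)$ (exponential in $|\im w|$, uniformly on vertical strips of bounded width) against at most polynomial growth of $F(s+w)$ in vertical strips — the latter being standard for Dirichlet series satisfying a functional equation of type (P3), via Phragm\'en--Lindel\"of applied between the region of absolute convergence and the reflected region, using \eqref{zerothgammaest} to control $G$. The bound on $r_2$ in \eqref{rtwoz} would then follow by estimating the integrand on $\re(w)=-1+\eta$: on this line $F(s+w)=F(-1/2+\eta+i(t+v))$ lies in the convergent (reflected) range, so $|F(s+w)|\ll (1+|t+v|)^{1-\eta}$ by the functional equation and convexity, $|X^w|=X^{-1+\eta}$ is constant, and $\int|\Gamma(-1+\eta+iv)|\,dv$ converges and concentrates $v$ near $0$, yielding $(1+|t|)^{1-\eta}X^{-1+\eta}$ after integrating.

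The main obstacle I anticipate is the careful bookkeeping of the $r_1$ residue term and, in particular, extracting the clean $e^{-|t|}$ decay. This requires pinning down where the poles of $F(s+w)$ fall once $s=1/2+it$ is fixed: since the poles of $F$ are finite in number and located at fixed points $\rho$, the corresponding $w=\rho-s=\rho-1/2-it$ has imaginary part $\approx -t$, and it is the factor $\Gamma(w)$ at these points that supplies $\Gamma(\sigma-it)\ll e^{-\pi|t|/2}$; one must check the constant and that the convexity/functional-equation growth of $F$ near its poles does not overwhelm this, and that the combined polynomial-in-$X$ factor is indeed $X^{\nu_0-1/2}$. The interchange of sum and integral in \eqref{mellinstart} and the vanishing of the horizontal pieces are routine given standard vertical growth estimates, so the genuine work is localized in the residue analysis producing $r_1$.
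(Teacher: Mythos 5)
Your proposal follows essentially the same route as the paper's proof: the termwise Mellin identity $\frac{1}{2\pi i}\int_{(c)}(X/n)^{w}\Gamma(w)\,dw=e^{-n/X}$ for $c>\nu_0$, the contour shift to $u=-1+\eta$ with the pole of $\Gamma(w)$ at $w=0$ producing $F(1/2+it)$ and the poles $w=\beta-1/2-it$ of $F(1/2+it+w)$ (with $\re(\beta)\le\nu_0$ and $\Gamma(\beta-1/2-it)=O(e^{-|t|})$) producing $r_1$, and the bound on $r_2$ obtained by reflecting $F$ via the functional equation on the line $u=-1+\eta$ and applying the Stirling estimate \eqref{zerothgammaest}. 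The only cosmetic difference is that where you invoke Phragm\'en--Lindel\"of/convexity for the growth of $F$ on that line, the paper substitutes the functional equation directly into the integral defining $r_2$ (so $\tilde{F}$ is evaluated in its half-plane of absolute convergence and is bounded, and the gamma quotient alone supplies the factor $(1+|t+v|)^{1-\eta}$), which is the same mechanism.
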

\begin{proof}
When $c>\nu_0$, we have 
\[
\frac{1}{2\pi i}\int_{u=c}F(1/2+it+w)X^w\Gamma(w)dw
=\sum_{n=1}^{\infty}\frac{a_ne^{-n/X}}{n^{1/2+it}}.
\]
When we move the line of 
integration from $u=c$ to $u=-1+\eta$ we will cross the poles of the integrand of the form 
$w=\beta-1/2-it$, where $\beta$ is a pole of $F(s)$, and also the pole at $w=0$. 
The residue at $w=0$ is $F(1/2+it)$, and the residue at $\beta-1/2-it$ is 
majorised upto a constant by the $X^{\beta-1/2}e^{-|t|}$, since 
$\Gamma(\beta-1/2-it)=O(e^{-|t|})$. For any pole $\beta$, 
we must have $\re(\beta)\le \nu_0$, since $\nu_0$ is the abscissa of 
absolute convergence. Hence, the residue at $\beta-1/2-it$ will also be majorised by 
$X^{\nu_0-1/2}e^{-|t|}$. We denote the sum of the residues by $r_1$. It is 
obviously identically zero if $F(s)$ is entire. 

From the functional equation we have
\[
F(1/2+it+w)=\omega Q^{-2it}\frac{\tilde{G}(1/2-it)}{G(1/2+it)}\tilde{F}(1/2-it-w).
\]
Substituting this in $r_2$, the estimate \eqref{rtwoz}
follows immediately from \eqref{zerothgammaest} . 
\end{proof}

\section{Classifying series of small degree: $0\le d <1$}\label{generalities}

We first show that the notion of degree is well-defined for functions in the class $\lsel$.
Indeed, suppose
that $F(s)$ in $\lsel$ satisfies two different functional equations
\[
\Phi_j(s):=Q_j^{s}G_j(s)F(s)=\omega_j\tilde{\Phi_j}(1-s),\quad j=1,2.
\]
Let $d_j$, $j=1,2$, be the degrees of $G_j(s)$. Taking the quotient $\Phi_1(s)/\Phi_2(s)$, we see that
\[
H(s)=\frac{G_1(s)}{G_2(s)}=\frac{\omega_1\tilde{G}_1(1-s)}{\omega_2\tilde{G}_2(1-s)}=c\cdot\tilde{H}(1-s).
\]
The left-hand side is holomorphic and non-vanishing for $\res\gg 0$, 
while the right-hand side is holomorphic and non-vanishing for $\res\ll 0$. 
Further, all the zeros and poles (on both sides) lie on a finite number of rays
parallel to the real (horizontal) axis. Thus, the zeros and poles of $H(s)$ must lie on a finite
number of line segments contained in a bounded vertical strip, whence it 
follows that $H(s)$ has at most finitely many zeros and poles. 
The number of poles of $G_j(s)$ with $\re(s)>-T$ is asymptotic to $d_jT$ for 
$j=1,2$. Hence, if 
$d_1\ne d_2$, $H(s)$ must have infinitely many zeros or poles. 
It follows that we must have $d_1=d_2$. Hence, we see that the degree of an element in $\lsel$ 
is well-defined. We are thus justified in using the notation $d_F$ to denote the degree of $F$.

If $F(s)$ is in $\lselp^{\infty}$, we follow the arguments of \cite{CoGh93}. We see that $H(s)$ is an entire function without zeros, 
since the poles and zeros of $G_1(s)/G_2(s)$ lie in the half-plane $\res<1/2$, 
while those of $\tilde{G_1}(1-s)/\tilde{G}_2(1-s)$ lie in $\res>1/2$. Since $H(s)$ is of
order $1$ (being the quotient of functions of order $1$), it must have the form $e^{as+b}$ for constants $a$ and $b$.
Because of the functional equation, we see that the constant $a$ is purely imaginary. Stirling's 
formula shows that the quotient $|H(it)/H(-it)|\to 1$ as $t\to\infty$, which shows that
$a=0$. We can summarise our arguments as
\begin{theorem}\label{uniquefnaleqn}
For $F(s)$ in $\lsel$, the degree $d_F$ is well defined. If $F(s)$ in 
$\lselp^{\infty}$ satisfies two different functional equations
\[
\Phi_j(s):=Q_j^{s}G_j(s)F(s)=\omega_j\overline{\Phi_j(1-\bar{s})},\quad j=1,2,
\]
there is a constant $c$ such that $Q_1^{s}G_1(s)=cQ_2^{s}G_2(s)$.
\end{theorem}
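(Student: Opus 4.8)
The plan is to prove the two assertions of Theorem \ref{uniquefnaleqn} in sequence, since the well-definedness of $d_F$ and the uniqueness of the gamma factor are established by the same comparison technique but require different inputs. For the first assertion, I would take any two functional equations satisfied by $F(s)\in\lsel$ of the form $\Phi_j(s)=Q_j^sG_j(s)F(s)=\omega_j\tilde\Phi_j(1-s)$ and form the quotient
\[
H(s)=\frac{G_1(s)}{G_2(s)},
\]
which by dividing the two functional equations satisfies a reflection relation $H(s)=c\cdot\tilde H(1-s)$ with $F(s)$ entirely cancelled out. The key structural observation is that each $G_j(s)$, being a product and quotient of $\Gamma$-factors of the form $\Gamma(\lambda s+\mu)$, has all of its zeros and poles lying on finitely many horizontal rays; consequently the zeros and poles of $H(s)$ are forced simultaneously into a right half-plane (from the left-hand expression) and a left half-plane (from the reflected expression), trapping them in a bounded vertical strip and hence limiting them to finitely many.

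Next I would exploit the counting asymptotics for the poles of a gamma factor: the number of poles of $G_j(s)$ in $\re(s)>-T$ grows like $d_jT$ as $T\to\infty$, where $d_j$ is the degree attached to $G_j$. If $d_1\ne d_2$, the cancellation in $H(s)=G_1(s)/G_2(s)$ cannot be complete—one factor contributes asymptotically more poles than the other can absorb as zeros—so $H(s)$ would be forced to have infinitely many zeros or poles, contradicting the finiteness just established. Therefore $d_1=d_2$, which shows $d_F$ is independent of the choice of functional equation and justifies the notation.

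For the second assertion I would restrict to $F(s)\in\lselp^{\infty}$, where condition (P3') supplies the crucial extra input: the real parts $\re(-\mu_j/\lambda_j)$ and $\re(-\mu_{j'}'/\lambda_{j'}')$ are all strictly less than $1/2$, so the poles and zeros of $G_1(s)/G_2(s)$ all lie in $\res<1/2$, while those of the reflected quotient $\tilde G_1(1-s)/\tilde G_2(1-s)$ lie in $\res>1/2$. Since $H(s)$ equals both, its zeros and poles must simultaneously satisfy contradictory constraints unless there are none at all; thus $H(s)$ is entire and nowhere vanishing. Being a ratio of functions of order $1$, $H(s)$ itself has order at most $1$, and a zero-free entire function of order $1$ must be of the form $e^{as+b}$. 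I would then pin down the constants: the reflection relation $H(s)=c\tilde H(1-s)$ forces $a$ to be purely imaginary, and a Stirling-type estimate (using \eqref{fourthgammaest}) shows $|H(it)/H(-it)|\to 1$ as $t\to\infty$, which is incompatible with a nonzero purely imaginary $a$, forcing $a=0$. Hence $H(s)=e^b$ is a constant, giving $Q_1^sG_1(s)=c\,Q_2^sG_2(s)$ as claimed.

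\emph{The main obstacle} I anticipate is controlling the behaviour of $H(s)$ well enough to conclude $a=0$: one must combine the order-$1$ growth bound, the functional-equation symmetry, and the precise Stirling asymptotics for the gamma quotient along the imaginary axis in a way that rules out a spurious oscillating factor $e^{ait}$. The counting-of-poles step in the first part and the order estimate for $H(s)$ in the second are conceptually routine once the horizontal-ray structure of gamma zeros is invoked, but the asymptotic matching that kills the exponential constant is where the argument must be executed with care, following the approach of \cite{CoGh93}.
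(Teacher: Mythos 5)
Your proposal matches the paper's proof essentially line for line: the same quotient $H(s)=G_1(s)/G_2(s)$ with the reflection relation $H(s)=c\,\tilde H(1-s)$, the horizontal-ray structure of gamma zeros and poles trapping the zeros and poles of $H$ in a bounded vertical strip, the pole-counting asymptotic $d_jT$ forcing $d_1=d_2$, and then the Conrey--Ghosh argument under (P3') (entire non-vanishing $H$ of order one, hence $e^{as+b}$, with the functional equation making $a$ purely imaginary and the Stirling estimate $|H(it)/H(-it)|\to 1$ forcing $a=0$). The only slip is cosmetic: the expression $G_1(s)/G_2(s)$ confines the zeros and poles to a \emph{left} half-plane and the reflected expression $\tilde G_1(1-s)/\tilde G_2(1-s)$ to a \emph{right} half-plane, not the other way around as your parentheticals state, but the intersection is the same bounded strip and the argument is unaffected.
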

In view of the theorem above, we can define $\lseld$ to be the subset consisting 
of series $F(s)$ in $\lsel$ with $d_F=d$.

We begin by classifying the elements in $\lseld$ when $0\le d\le 1$. To this end
we first prove the following proposition following the proof for $\sel$ in
\cite{CoGh93}. 
\begin{proposition}\label{absconv} If $0\le d_F<1$, the series $F(s)$ is absolutely convergent
on the whole complex plane and hence gives rise to an entire function.
\end{proposition}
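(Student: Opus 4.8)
The plan is to show that the abscissa of absolute convergence $\nu_0$ must equal $1/2$ when $0\le d_F<1$, which forces absolute convergence everywhere to the right of $\res=1/2$, and then to use the functional equation together with a convexity/Phragmén--Lindelöf argument to push convergence across to the whole plane. The starting point is Lemma~\ref{basiclemma}, which for $F(s)\in\lsel_1(\nu_0)$ expresses $F(1/2+it)$ as a smoothed Dirichlet series plus the two controlled error terms $r_1$ and $r_2$. The key quantitative input is the estimate $r_2\ll (1+|t|)^{1-\eta}X^{-1+\eta}$ from \eqref{rtwoz}, which encodes exactly the fact that the gamma-factor quotient $\tilde G(1/2-it)/G(1/2+it)$ has the growth dictated by the degree through Stirling's formula \eqref{fourthgammaest}.

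First I would suppose for contradiction that $\nu_0>1/2$ and analyse the size of $F$ on the critical line. From \eqref{basiclemmaeqn} the truncated sum $\sum a_n e^{-n/X}n^{-1/2-it}$ has size governed by $X^{\nu_0-1/2}$ (the main term being detected by partial summation against the abscissa of convergence), while $r_1$ is $O(X^{\nu_0-1/2}e^{-|t|})$ and $r_2$ is $O((1+|t|)^{1-\eta}X^{-1+\eta})$. The idea is to balance the two $X$-dependent contributions by choosing $X$ as an appropriate power of $(1+|t|)$, so that the bound on $F(1/2+it)$ becomes a power of $(1+|t|)$ whose exponent is an explicit function of $\nu_0$, $d_F$ and $\eta$. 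Since $d_F<1$, optimising in $X$ and then letting $\eta\to 0$ should yield a mean-value or pointwise bound for $F$ on the line $\res=1/2$ that is incompatible with the assumption $\nu_0>1/2$: the functional equation relates the growth on $\res=1/2$ to the coefficient size, and a degree strictly below one does not supply enough ``room'' in the gamma factor to sustain an abscissa above $1/2$. Comparing this with the lower bound on the true order of growth coming from $\nu_0>1/2$ gives the contradiction, so $\nu_0=1/2$.

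Once $\nu_0=1/2$ is established, the series $F(s)=\sum a_n n^{-s}$ converges absolutely in $\res>1/2$. To extend absolute convergence to all of $\C$ I would invoke the functional equation \eqref{fnaleqn}: on $\res<1/2$ one writes $F(s)=\omega Q^{1-2s}\bigl(\tilde G(1-s)/G(s)\bigr)\tilde F(1-s)$, and $\tilde F(1-s)$ converges absolutely there because $1-s$ lies in $\res>1/2$. The remaining task is to control the coefficients: the growth of the gamma quotient on vertical lines, again via \eqref{zerothgammaest} and \eqref{fourthgammaest}, combined with $d_F<1$, shows that the reflected Dirichlet coefficients decay fast enough that the original series $\sum a_n n^{-s}$ is in fact absolutely convergent for every $s$, i.e.\ $a_n\ll n^{-A}$ for all $A>0$. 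Finitely many poles, controlled by the polynomial $P(s)$ in (P2), are removed by $P(s)F(s)$ being entire, so the conclusion is that $F$ extends to an entire function given by an everywhere-absolutely-convergent Dirichlet series.

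The main obstacle I expect is the first step: making the balancing argument rigorous enough to genuinely rule out $\nu_0>1/2$. The delicate point is that the crude bound on the smoothed sum in terms of $X^{\nu_0-1/2}$ must be matched against a genuine lower bound for the order of $F$ on $\res=1/2$ implied by $\nu_0>1/2$, and one must ensure the chosen value of $X=X(t)$ keeps the line $u=-1+\eta$ free of poles and keeps $r_1$ negligible uniformly in $t$. Getting the exponent bookkeeping exactly right, so that the inequality closes precisely when $d_F<1$ (and fails at $d_F=1$, consistent with the separate treatment promised for degree one), is where the argument of \cite{CoGh93} must be adapted carefully to the more general gamma factors with $r'>0$ and to the weaker convergence hypothesis $\nu_0\ge 1/2$ allowed in $\lsel$.
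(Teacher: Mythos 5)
There is a genuine gap, and in fact both halves of your plan fail. In the first step, the contradiction you hope for does not exist: an upper bound $F(1/2+it)\ll (1+|t|)^{\kappa}$, obtained by balancing $X^{\nu_0-1/2}$ against the bound $(1+|t|)^{1-\eta}X^{-1+\eta}$ from \eqref{rtwoz}, is perfectly compatible with $\nu_0>1/2$. Pointwise polynomial growth on a vertical line (or even boundedness in a half-plane, by Bohr's theorem, where the gap between the abscissae of boundedness and of absolute convergence can genuinely equal $1/2$) carries no lower-bound information about $\sigma_a$, so there is no ``genuine lower bound for the order of $F$ on $\re(s)=1/2$ implied by $\nu_0>1/2$'' to compare against. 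Moreover, your target $\nu_0=1/2$ is much weaker than what the proposition asserts, namely absolute convergence on all of $\C$, which is equivalent to $a_n\ll n^{-A}$ for every $A>0$.

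The second step cannot repair this: the functional equation \eqref{fnaleqn} together with Phragm\'en--Lindel\"of yields only meromorphic continuation with polynomial growth in vertical strips, and such growth bounds never imply coefficient decay --- $\zeta(s)$ itself has excellent polynomial bounds on every vertical line while $a_n\equiv 1$. Your assertion that ``the reflected Dirichlet coefficients decay fast enough'' is exactly the conclusion, not an argument for it. The paper's proof (following \cite{CoGh93}) supplies the missing mechanism, and it is there that $d_F<1$ actually enters: one forms $h(y)=\sum_{n\ge 1}a_ne^{-2\pi ny}$ as an inverse Mellin transform of $F(s)\Gamma(s)(2\pi y)^{-s}$, shifts the contour to $-\infty$ to obtain the power series $\sum_{n\ge 0}(-1)^nF(-n)(2\pi y)^n/n!$ plus finitely many pole terms, and uses the functional equation with the Stirling estimate \eqref{zerothgammaest} to show $G(n+1)/(G(-n)\,n!)=O(n^{-(1-d)n}K^n)$ --- the factorial coming from $\Gamma(s)$ beats the gamma quotient precisely because $d<1$ --- so the power series is entire in $y$ and $h$ continues to $\C\setminus(-\infty,0]$. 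The $i$-periodicity of $h$ then extends it to an entire function, and the Fourier extraction $a_ne^{-2\pi ny}=\int_0^1 h(ix-y)e^{-2\pi inx}\,dx$, differentiated repeatedly in $y$, gives $a_n\ll n^{-k}$ for every $k$. Your proposal contains no analogue of this coefficient-extraction step; without one, no amount of convexity bookkeeping in $X$, $\eta$, $\nu_0$, $d_F$ can reach the conclusion.
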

\begin{proof}
For $\re(s)=c>\nu_0$, we know that
\[
h(y)=\sum_{n=1}^{\infty}a_ne^{-2\pi ny}=\int_{\re(s)=c}F(s)\Gamma(s)(2\pi y)^{-s}ds.
\]
Shifting the line of integration to the left and letting $c\to-\infty$, we get
\[
\sum_{n=1}^{\infty}a_ne^{-2\pi ny}=
\sum_{n=0}^{\infty}\frac{(-1)^nF(-n)(2\pi y)^n}{n!}
+\sum_{k=1}^{l}P_k(\log y)y^{\beta_k}.
\]
where $-\beta_k$ are the poles of $F(s)$ and the $P_k$ are polynomials
(with degree one less than the order of the pole at $\beta_k$) for $1\le k\le l$.
Using the functional equation \eqref{fnaleqn}, we obtain
\[
\sum_{n=1}^{\infty}a_ne^{-2\pi ny}=
\sum_{n=0}^{\infty}\frac{(-1)^n G(n+1)F(n+1)(2\pi y)^n}{G(-n)n!}+
\sum_{k=1}^{l}P_k(\log y)y^{\beta_k}.
\]
Using \eqref{zerothgammaest} one checks easily that $G(n+1)/G(-n)n!=O(n^{-(1-d)n}K^n)$ 
for some constant $K$, so the infinite sum on the right above converges. 
It follows that $h(y)$ extends to a holomorphic function on $\C\setminus (-\infty,0]$. 
But we also know that $h(y)$ is periodic with period $i$, so it extends holomorphically
to the real axis as well, and thus to an entire function $h(z)$.
Thus, we can write 
\[
a_ne^{-2\pi ny}=\int_{0}^{1}h(ix-y)e^{-2\pi inx}dx.
\]
Differentiating repeatedly with respect to $y$ shows that
$a_n\ll n^{-k}$, for any $k\ge 0$. It follows that the Dirichlet
series $\sum_{n=1}^{\infty}a_nn^{-s}$ converges absolutely (and uniformly
on compact subsets) everywhere in the complex plane. It is thus an entire 
function.
\end{proof}
\begin{theorem}\label{lselzero} The class $\lsel_0$ consists of 
Dirichlet polynomials of the form 
\[
F(s)=\sum_{n\,|\,Q_1}\frac{a_n}{n^s},
\]
for some $Q_1>0$ an integer. In fact, $\lsel_0=\esel_0$.
We also have $\gselp_0=\{1\}$.
\end{theorem}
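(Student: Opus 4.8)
\emph{Plan.} The plan is to deduce all three assertions from Proposition~\ref{absconv} together with the functional equation \eqref{fnaleqn}, broadly following the degree‑zero argument for $\sel$ in \cite{CoGh93}, but paying attention to a new feature: unlike in the Selberg class, a degree‑zero element of $\lsel$ may a priori carry a nontrivial gamma factor, since we permit $r'\ge 1$ (in $\sel$, $d_F=0$ forces $r=r'=0$). First I would record what Proposition~\ref{absconv} and its proof give when $d_F=0$: the series $F(s)$ is entire, the coefficients $a_n$ decay faster than any power of $n$, and (the pole terms being absent because $F$ is entire) the function $h(y)=\sum_{n\ge 1}a_ne^{-2\pi n y}$ has the everywhere‑convergent expansion $h(y)=\sum_{n\ge 0}c_n(2\pi y)^n$ with $c_n=\frac{(-1)^nG(n+1)F(n+1)}{G(-n)\,n!}$, where the estimate $G(n+1)/(G(-n)n!)=O(n^{-n}K^n)$ from that proof (the case $d=0$) gives $c_n=O(n^{-n}K^n)$.

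Next I would prove finiteness. This bound on $c_n$ shows that $h$ is an entire function of order one and finite type, so $|h(y)|\ll e^{C|y|}$. Setting $z=e^{-2\pi y}$ and $g(z)=\sum_{n\ge 1}a_nz^n$ (entire in $z$ by the rapid decay of the $a_n$), the bound $|h(y)|\ll e^{C|y|}$ transforms, via $y=-\tfrac{1}{2\pi}\log z$, into polynomial growth $|g(z)|\ll |z|^{M}$ as $|z|\to\infty$. An entire function of polynomial growth at infinity is a polynomial, so $g$, and hence $F(s)=\sum_{n=1}^{N}a_nn^{-s}$, is a Dirichlet polynomial.

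The heart of the matter, and the step I expect to be the main obstacle, is to pin down the support and to dispose of the gamma factor. I would rewrite \eqref{fnaleqn} as $G(s)F(s)=\omega Q^{1-2s}\tilde G(1-s)\tilde F(1-s)$. Since $F$ and $\tilde F(1-s)$ are now Dirichlet polynomials, their zeros lie in a bounded vertical strip, so $\tilde G(1-s)/G(s)=F(s)/\bigl(\omega Q^{1-2s}\tilde F(1-s)\bigr)$ has all of its zeros and poles in a bounded strip. On the other hand, the gamma factors contribute zeros and poles whose real parts tend to $\pm\infty$ unless they cancel completely; hence $\tilde G(1-s)/G(s)$ must be a \emph{rational} function. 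Consequently both sides of the functional equation are finite sums $\sum_k p_k(s)\mu_k^{-s}$ with polynomial coefficients $p_k$ and positive frequencies $\mu_k$, and the linear independence of the functions $s\mapsto s^j\mu^{-s}$ lets me match frequencies and coefficients termwise. Matching frequencies forces the support $\{n:a_n\neq 0\}$ to be invariant under $n\mapsto Q^2/n$; in particular each such $n$ divides $Q_1:=Q^2$, which is therefore a positive integer, giving $F(s)=\sum_{n\mid Q_1}a_nn^{-s}$. Matching the polynomial coefficients forces $G(s)$ proportional to $\tilde G(1-s)$, so the net relation reduces to the clean one $a_n=(\omega'/Q')\,n\,\overline{a_{Q_1/n}}$ coming from a functional equation with trivial gamma factor ($r=r'=0$). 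With this clean functional equation $F$ is entire of abscissa $\le 1$ and satisfies (P3$''$) vacuously, so $F\in\esel_0$; as $\esel\subset\lsel$ gives the reverse inclusion, $\lsel_0=\esel_0$.

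Finally, for $\gselp_0=\{1\}$ I would combine the clean functional equation with the Euler product. Here (P4$'$) forces $a_1=1$, so $1$, and hence $Q_1$, lies in the support, and the clean relation at $n=1$ yields $|a_{Q_1}|=Q_1^{1/2}$. At the same time each local factor $F_p(s)$ is a polynomial in $p^{-s}$; writing it as $\prod_i(1-\gamma_{p,i}p^{-s})$, the condition $|b_{p^k}|\le Cp^{k\theta}$ forces $|\gamma_{p,i}|\le p^{\theta}$ on each inverse root, whence by multiplicativity $|a_{Q_1}|=\prod_p|a_{p^{m_p}}|\le Q_1^{\theta}$. Comparing, $Q_1^{1/2}\le Q_1^{\theta}$, and since $\theta<1/2$ this forces $Q_1=1$, i.e. $F\equiv 1$. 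The delicate point throughout is the rationality and effective triviality of the degree‑zero gamma factor in the third paragraph: this is precisely the new phenomenon created by allowing $r'\ge 1$, and it is where I expect the argument to need the most care.
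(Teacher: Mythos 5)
Your proposal is essentially correct but follows a genuinely different route from the paper. The paper attacks the gamma factor \emph{first}: it shows $G(s)$ has finitely many zeros (any infinite family of zeros would force $\tilde F(1-s)$ to vanish infinitely often in a left half-plane, impossible for a convergent Dirichlet series), then shows $G(s)$ has finitely many poles via a combinatorial density argument on ``$\gamma$-sets'' (this is where $d_F=0$ enters), and only then deduces via Perron's formula that $F$ is a Dirichlet polynomial supported on divisors of $Q_1$. You invert the order: you get the Dirichlet polynomial structure directly from Proposition~\ref{absconv} (entire, rapidly decaying coefficients, $c_n=O(n^{-n}K^n)$) plus an exponential-type/Liouville argument on $g(z)=\sum a_n z^n$, and then the finiteness of zeros and poles of $\tilde G(1-s)/G(s)$ falls out of the bounded-strip observation, with frequency matching of exponential polynomials replacing Perron's formula. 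Your route entirely bypasses the paper's $\gamma$-set lemma and corollary, which is its main technical machinery here; the cost is reliance on Proposition~\ref{absconv}, which the paper's proof of this theorem does not use. Your proof that $\gselp_0=\{1\}$ is also different and correct: the paper argues that each $F_p$ is non-vanishing for $\re(s)\ge 1/2$ (since $\theta<1/2$), extends non-vanishing everywhere by the functional equation, and concludes $F\equiv 1$ from $a_1=1$; you instead compare $|a_{Q_1}|=Q_1^{1/2}$ (from the clean functional equation) with $|a_{Q_1}|\le Q_1^{\theta}$ (the local leading coefficient is the product of the inverse roots, each of modulus $\le p^{\theta}$), which is a nice self-contained alternative.

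There is, however, one step that is not justified as written, precisely at the point you flag as delicate: from ``all zeros and poles of $H(s)=\tilde G(1-s)/G(s)$ lie in a bounded strip'' you conclude ``$H$ must be a \emph{rational} function.'' Finitely many zeros and poles does not imply rationality ($e^s$ has none), so an argument is missing. The repair is the one the paper makes at the corresponding point: $H$ is a quotient of (reciprocals of) products of Gamma functions, hence of entire functions of order $1$, so Hadamard factorization gives $H(s)=R(s)e^{as+b}$ with $R$ rational — the form $AB^sR(s)$ in the paper — not $H$ rational. The exponential is then harmless: carried through your frequency-matching step, linear independence of $s^j e^{\beta s}$ forces $a\in\R$ (otherwise all coefficients vanish) and the factor $e^{as}$ is absorbed into the power of $Q$, so the support divides $Q_1=Q^2e^{-a}$, a positive integer, rather than $Q^2$ itself. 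With this correction the rest of your argument, including the reduction to a clean functional equation and the conclusion $F\in\esel_0$, goes through.
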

\begin{proof} 
We first assume that $r^{\prime}\ne 0$ for
every non-constant factor $G(s)$ that arises in a functional equation of the form
\eqref{fnaleqn} for $F(s)$. We write $G(s)=g_1(s)/g_2(s)$ where
\[
g_1(s)=\prod_{j=1}^{r}\Gamma(\lambda_j s+\mu_j)\,\,\text{and}\,\, 
g_2(s)=\prod_{j^{\prime}=1}^{r^{\prime}}\Gamma(\lambda_{j^{\prime}}^{\prime} s+\mu_{j^{\prime}}^{\prime}).
\]
Our first task is to show that $G(s)$ has only finitely many zeros and poles.
Let $\alpha>0$ be a real number and $\beta\in\C$. 
We will refer to a subset of $\C$ of the form 
$W=\{-(n+\beta)/\alpha \,\vert \,n\in \N\cup\{0\}\}$ as a $\gamma$-set,
or more precisely, as a $\gamma(\alpha,\beta)$-set. We see that
the multiset of zeros of $G(s)$ is the finite sum of $\gamma$-sets.
Notice that the elements of a $\gamma$-set lie on a horizontal 
ray parallel to the negative real axis.

Suppose that $G(s)$ has infinitely many zeros. These will lie on a finite number of
rays parallel to the negative real axis. Since $F(s)$ has at most finitely many
poles, all but finitely many of the zeros of $G(s)$ will be poles of $\Phi(s)$,
and hence, of $\tilde{\Phi}(1-s)$. The zeros of $\tilde{G}(1-s)$ lie on finitely
many rays parallel to the positive real axis. Hence, only finitely many of
these will lie in any left half-plane. It follows that $\tilde{F}(1-s)$
has infinitely many zeros lying on some ray parallel to the negative real axis.
This is impossible, since $\tilde{F}(1-s)$, being absolutely convergent
in some left half-plane, is dominated by its first non-zero term and 
has no zeros for $\re(s)<-c_0$ for some $c_0>0$. Thus $G(s)$ has only finitely many zeros.

We now show that $G(s)$ has only finitely many poles, that is, that all but 
finitely many of the poles of $g_1(s)$ are poles of $g_2(s)$. 
Let $H_T$ denote the half-plane $\res>-T$, and
let $N_S(T)=|S\cap H_T|$ for any discrete subset $S$ of $\C$. We denote 
the $\lim_{T\to\infty}N_S(T)/T$ by $D(S)$, if it exists, and call it the 
density of $S$.
Note that $D(W)=\alpha$ for the $\gamma(\alpha,\beta)$-set $W$. 
The notion of density generalises naturally to multisets, in particular to sums of $\gamma$-sets.
If $V=\sum_{i=1}^rW_i$ is a multiset which is the sum of the sets $W_i$ each with
density $\alpha_i$, then $D(V)=\sum_{i=1}^rD(W_i)$.
We will need the following lemma.
\begin{lemma} Let $W_i=\{(-n-\beta_i)/\alpha_i \,\vert\,n\in \N\cup\{0\}\}$, $i=1,2$ be
$\gamma$-sets. Then either 
$|W_1\setminus W_2|<\infty$ or $D(W_1\setminus W_2)>0$. Further,
if $D(W_1\setminus W_2)>0$, $W_1\setminus W_2$ is a union
of $\gamma$-sets and a finite set.
\end{lemma}
\begin{proof} 
Either $|W_1\cap W_2|\le 1$ or 
$|W_1\cap W_2|\ge 2$. In the first case there is nothing to prove.
In the second case, it is easy
to see that $\alpha_1=\alpha_2m_1/m_2$
and $\beta_1+k=\beta_2m_1/m_2$
for coprime integers $m_1$ and $m_2$ and some integer $k$.
The elements of $W_1\cap W_2$ have (except possibly for a finite set) the form
$-[m_1n+a+\beta_1]/\alpha_1$ for some integer $a$ with 
$0\le a<m_1-1$. If $m_1=1$, we see that $|W_1\setminus W_2|<\infty$.
If $m_1-1>1$, $W_1\setminus W_2$ is (upto a finite set) a union
of sets of the form $-[m_1n+b+\beta_1]/\alpha_1$, 
$n\in \{0\}\cup \N$ and thus a union of $\gamma$-sets and
a finite set. Further $D(W_1\setminus W_2)=(m_1-1)D(W_1)/m_1=(m_1-1)\alpha/m_1>0$.
This proves the lemma.
\end{proof}
\begin{corollary}
If $V_1=\sum_{i=1}^rW_i^1$ and $V_2=\sum_{j=1}^sW_j^2$ are the sums of $\gamma$-sets viewed
as multisets (that is, if an element occurs in $k$ different sets $W_i^1$ or $W_j^2$, it is thought of
as occurring $k$ times in the sum $V_1$ or $V_2$) , then
either $|V_1\setminus V_2|<\infty$ or $D(V_1\setminus V_2)>0$. In the latter
case $V_1\setminus V_2$ is a union of $\gamma$-sets and a finite set.
\end{corollary}
\begin{proof} Indeed, we simply apply the lemma succesively to the differences
$W_i^1\setminus W_j^2$ as $1\le i\le r$ and $1\le j\le s$. The subtractions are performed
in lexicographic order on the pairs $(i,j)$. After each subtraction,
the lemma above shows that what remains of each of $W_i^1$ and $W_j^2$ is a finite set or 
a finite union of $\gamma$-sets and (possibly) a 
finite set, and we can thus continue the process with the next subtraction.
\end{proof}

We apply our corollary to the following situation.
The multiset $V_1$ of poles
of $g_1(s)$ is the sum of the $\gamma$-sets $S_j=\{(-n-\mu_j)/\lambda_j\}$, $1\le j\le r$, while the
multiset of poles $V_2$
of $g_2(s)$ is the sum of the $\gamma$-sets $S^{\prime}_{j^{\prime}}=
\{(-n^{\prime}-\mu^{\prime}_{j^\prime})/\lambda^{\prime}_{j^{\prime}}$, 
$1\le j^{\prime}\le r^{\prime}$,
where $n$ and $n^{\prime}$ run through the non-negative integers.
We have already shown that $G(s)$ has only finitely many zeros.
Hence, $|V_2\setminus V_1|<\infty$ so $D(V_1)\ge D(V_2)$.
If $V_1\setminus V_2$ is not finite, the corollary 
above tells us that $D(V_1\setminus V_2)>0$. Thus we find
that $D(V_1)>D(V_2)$. But by assumption, the degree of $G(s)=D(V_1)-D(V_2)=0$.
This gives a contradiction. Thus $G(s)$ has at most finitely many poles,
which is what we have been trying to prove.

Since $G(s)$ has at most finitely many zeros and poles, and is a quotient of
functions of order $1$, it can be replaced in the functional
equation by a factor of the form $AB^sR(s)$ for some rational function $R(s)=p(s)/q(s)$, where
$p(s)$ and $q(s)$ are monic polynomials, $A\in \C$ and $B>0$. Cross multiplying, 
we get an equation of the form
\[
\tilde{p}(1-s)q(s)F(s)=\omega Q_1^{s}p(s)\tilde{q}(1-s)\tilde{F}(1-s),
\]
for $Q_1>0$. Using Perron's formula (as in  \cite{KaPe99}) it follows immediately 
that $Q_1$ is an integer and that $F(s)$ is a Dirichlet polynomial with nonzero 
coefficients only when $n\mid Q_1$. It also easily follows as a 
consequence that $\tilde{p}(1-s)q(s)=p(s)\tilde{q}(1-s)$. Thus $F(s)$
actually satisfies a functional equation of the form
\[
F(s)=\omega Q_1^{s}\tilde{F}(1-s),
\]
so $F(s)$ lies in $\esel$.

If $F(s)\in \gselp_0$, we know further that $F_p(s)$ is non-vanishing for $\res\ge 1/2$, 
and hence, that $F(s)$ which is a product of at most finitely many $F_p(s)$, is also
non-vanishing in this half-plane. By the functional equation it is non-vanishing
in $\res\le 1/2$ as well. It follows that $F(s)$ is entire and non-vanishing, and 
since $a_1=1$, we must have $F(s)\equiv 1$.
\end{proof}
\begin{remark} We have proved the analogue of first part of Theorem 1 of 
\cite{KaPe99}. The second part of Theorem 1 of that paper gives a somewhat
more precise description of the elements of $\esel_0$ in terms of invariants
$q$ and $\omega^{*}$ that Kaczorowski and Perelli associate to elements of $\esel_0$.
We can thus recover a similar sharper statement for elements of $\lsel_0$.
\end{remark}
\begin{theorem}\label{lselzerooneempty} 
 If $0<d<1$, then $\lseld=\emptyset$.
\end{theorem}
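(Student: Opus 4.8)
The plan is to combine the results already established for small degree: Proposition \ref{absconv}, which tells us that any $F(s)\in\lseld$ with $0\le d<1$ is entire and given by an everywhere absolutely convergent Dirichlet series, together with the structural analysis of the gamma factor $G(s)$ carried out in the proof of Theorem \ref{lselzero}. The key point is that the density argument developed there — via the lemma and corollary on $\gamma$-sets — localises the degree $d_F=D(V_1)-D(V_2)$ as a difference of densities of poles of $g_1(s)$ and $g_2(s)$, and this difference is forced to be an integer by the arithmetic constraints on the $\gamma$-sets. First I would make explicit that when $0<d<1$, the function $F(s)$ is entire (by Proposition \ref{absconv}), so $P(s)$ in (P2) may be taken to be $1$ and $\Phi(s)=Q^sG(s)F(s)$ has its poles and zeros governed entirely by $G(s)$ and the zeros of $F(s)$.

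The central mechanism I would exploit is the same zero-counting dichotomy used in Theorem \ref{lselzero}. Since $F(s)$ is entire and absolutely convergent in a left half-plane, it is dominated by its leading term and therefore has no zeros for $\re(s)<-c_0$; by the functional equation the same holds on the far right for $\tilde F(1-s)$. This rigidity means that, just as in the degree-zero case, all but finitely many zeros of $g_1(s)$ must be cancelled by poles coming from $\tilde g_2(1-s)$ (equivalently, the zero/pole multisets of $G(s)$ and of $\tilde G(1-s)$ must nearly match after accounting for the finitely many zeros of the entire functions involved). I would then invoke the corollary on $\gamma$-sets to conclude that the symmetric differences $V_1\setminus V_2$ and $V_2\setminus V_1$ are each either finite or of strictly positive density, and in the positive-density case are again unions of $\gamma$-sets. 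Tracking densities, the surviving poles of $G(s)$ form a union of $\gamma(\alpha,\beta)$-sets whose total density equals $d_F$.

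The decisive step — and where I expect the real work to lie — is showing that this surviving density cannot be a number strictly between $0$ and $1$. The arithmetic in the lemma shows that when two $\gamma$-sets share infinitely many points, their parameters satisfy $\alpha_1=\alpha_2 m_1/m_2$ with $m_1,m_2$ coprime, and the leftover density is $(m_1-1)\alpha/m_1$; iterating these rational commensurability relations across all the $\gamma$-sets forces the total leftover density of poles of $G(s)$, after cancellation against $\tilde G(1-s)$, to be a nonnegative \emph{integer} multiple of a common density, i.e. an integer. Concretely, I would argue that the residual gamma factor, having finitely many zeros and poles after all cancellation, reduces (as in Theorem \ref{lselzero}) to a factor of the shape $AB^sR(s)$ with $R$ rational, whose degree contribution to $d_F$ is necessarily $0$; so a genuinely non-integral residue of density is incompatible with the functional equation's symmetry. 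Since $d_F$ must therefore be an integer, the assumption $0<d_F<1$ is vacuous, giving $\lseld=\emptyset$. The main obstacle will be handling the interaction between the finitely many zeros of the entire function $F(s)$ (and of $\tilde F(1-s)$) and the $\gamma$-set bookkeeping, so that one cleanly isolates $d_F$ as a density of an honest union of $\gamma$-sets; but since these finite contributions do not affect densities, the argument should close exactly as in the degree-zero classification.
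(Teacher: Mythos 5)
Your proposal diverges from the paper's proof, and its decisive step contains a genuine gap. The $\gamma$-set bookkeeping from Theorem \ref{lselzero} does generalise to the extent you describe: since $\tilde{F}(1-s)$ is nonvanishing far to the left, $G(s)$ has finitely many zeros, and the corollary on $\gamma$-sets then shows the uncancelled poles of $G(s)$ form a union of $\gamma$-sets (plus a finite set) of total density $d_F/2>0$. But your claim that the commensurability relations in the lemma force this density to be an \emph{integer} is false: the density of a single $\gamma(\alpha,\beta)$-set is $\alpha$, an arbitrary positive real (e.g.\ $\Gamma(\lambda s+\mu)$ contributes density $\lambda$, and nothing prevents $\lambda=d/2\in(0,1/2)$). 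The relation $\alpha_1=\alpha_2 m_1/m_2$ only ties the densities of \emph{overlapping} $\gamma$-sets to one another rationally; it puts no arithmetic constraint on the residual total. Indeed, if such an integrality argument worked it would prove the full degree conjecture (no non-integral degrees in any range), which is a famous open problem beyond $d<2$ --- Kaczorowski and Perelli's proof that $1<d<2$ is impossible required substantially harder analysis. Nor can you extract a contradiction from the surviving positive-density family of poles of $G(s)$ itself: since $F(s)$ is entire (Proposition \ref{absconv}) but may well have infinitely many zeros along leftward horizontal rays, those poles of $G(s)$ can simply be cancelled by trivial zeros of $F(s)$ --- this is exactly what happens for $\zeta(s)$ and every classical $L$-function, so the zero/pole matching closes consistently and yields nothing. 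In the degree-zero case the argument succeeds only because the residual density is forced to be $0$, making $G(s)$ essentially rational; that mechanism has no analogue for $0<d<1$.

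The paper's actual proof is much shorter and uses the everywhere absolute convergence of Proposition \ref{absconv} quantitatively, not merely entirety: since the Dirichlet series converges absolutely in the whole plane, $F(s)$ is uniformly bounded in every half-plane $\re(s)>-\nu$, in particular on the line $\res=-\varepsilon$. On the other hand the functional equation
\[
F(s)=\frac{\tilde{G}(1-s)}{G(s)}\,Q^{1-2s}\,\tilde{F}(1-s)
\]
together with Stirling's formula shows that on $\res=-\varepsilon$ the gamma quotient grows like $|t|^{d(1/2+\varepsilon)}$ with $d>0$, while $\tilde{F}(1-s)$ does not decay, so $F(s)$ is unbounded there --- a contradiction. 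Note how the two hypotheses split: $d<1$ enters only through Proposition \ref{absconv}, and $d>0$ only through the Stirling growth. Your proposal invokes the proposition but then discards the boundedness it provides, which is precisely the ingredient needed to close the argument; if you want to repair your approach, this growth comparison is the missing idea.
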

For the Selberg class this is a theorem of Richert \cite{Richert57} and Conrey 
and Ghosh (\cite{CoGh93}) and we follow the proof in the latter paper.
\begin{proof}
By Proposition \ref{absconv} we know that $F(s)$
is uniformly bounded in $\re(s)>-\nu$ since it is absolutely convergent in every half-plane.
The functional equation 
\[
F(s)=\frac{\tilde{G}(1-s)}{G(s)}Q^{1-2s}\tilde{F}(1-s)
\]
and Stirling's formula show that $F(s)$ cannot be bounded on the vertical line $\res=-\varepsilon$
for any $\varepsilon>0$.
\end{proof}

\section{The case $d_F=1$}
The main result of this section is
\begin{theorem}\label{mainthm} 
Suppose that $F(s)$ is in $\lsel_1(1)$. 
\begin{enumerate}
\item There exists $A\in \R$ 
and an integer $q>0$ such that $a_nn^{-iA}$ is periodic with period $q$. Further,
\begin{equation}\label{lincomb}
F(s)=\sum_{\chi\Mod{q}}P_{\chi}(s)L(s+iA,\chi^{*}),
\end{equation}
where the sum runs over all Dirichlet characters $\chi\Mod{q}$, $P_{\chi}\in \lsel_0$ and
$\chi^{*}$ is the primitive Dirichlet character inducing $\chi$.
\item
If $F(s)$ is in $\lselp_1(1)$, 
there is a Dirichlet character $\chi\Mod{q}$ such that 
$F_p(s)=(1-\chi_pp^{-s+iA})^{-1})$ for all $p\nmid q$. If
further $F(s)$ is in $\gselp$, $F(s)=L(s+iA,\chi)$ for a primitive Dirichlet character
$\chi\Mod{q}$.
\end{enumerate}
\end{theorem}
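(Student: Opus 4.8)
The plan is to adapt the degree-one argument for the Selberg class in \cite{CoGh93}; the one genuinely new input is that the Stirling asymptotic \eqref{fourthgammaest} is insensitive to the internal shape of $G(s)$, so the extra generality we permit (a quotient of gamma factors with $r'>0$, trivial zeros to the right of $\re(s)=1/2$, and an arbitrary polynomial in (P2)) does not disturb the analytic skeleton. First I would read off from \eqref{fourthgammaest}---whose leading factor $e^{-it\log(t/2e)}$ is forced by $d_F=1$ and whose invariants satisfy $A\in\R$, $C>0$---the real shift $A$ (the exponent in $t^{iA}$) and the positive constant $C$; together with $Q$ these carry the arithmetic conductor. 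Since a vertical translate $F(s\pm iA)$ stays in $\lsel_1(1)$, keeps the same degree and abscissa, and absorbs the factor $t^{iA}$, I may assume $A=0$ and reduce part (1) to proving that $a_n$ itself is periodic with an integer period $q$.

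The periodicity is the step I expect to be the main obstacle. Applying Lemma \ref{basiclemma} to $F$ and to $\tilde F$ writes $F(1/2+it)$ both as its own (smoothed, essentially finite) Dirichlet series and, through \eqref{fnaleqn} and \eqref{fourthgammaest}, as
\[
F(1/2+it)\;\sim\;\omega e^{iB}e^{-it\log(t/2e)}(Q^2C)^{-it}\sum_{m=1}^{\infty}\frac{\bar a_m e^{-m/Y}}{m^{1/2-it}}.
\]
Equating the two, I would extract the coefficients by a resonance argument: testing $F(1/2+it)$ against oscillatory weights (and using $\frac1T\int_T^{2T}(m/n)^{it}\,dt\to\delta_{m,n}$, the orthogonality/Perron device already used in the degree-zero case and in \cite{KaPe99}) reduces matters to a stationary-phase evaluation of the dual sum. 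The kernel $e^{-it\log(t/2e)}(Q^2C)^{-it}$ has a saddle $t_0(m)$ proportional to $m$, at which the $m$-th dual term resonates with a single direct index $n=n(m)$, yielding an explicit arithmetic correspondence between $m$ and $n$. Demanding that this correspondence hold consistently for all large $t$ simultaneously both pins $Q^2C$ to a positive integer $q$ and forces the exact relations $a_{n+q}=a_n$; the $O(1/t)$ term in \eqref{fourthgammaest} and the tails $r_1,r_2$---controlled because $\nu_0\le1$---wash out as $t\to\infty$. All the delicacy sits in the bookkeeping of this stationary-phase correspondence, which is precisely where the calculation of \cite{CoGh93} must be adapted to the present generality.

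Granting that $a_nn^{-iA}$ is periodic modulo $q$, the representation \eqref{lincomb} is then routine finite Fourier analysis: expanding the periodic coefficient sequence over pairs $(e,\chi)$ with $e\mid q$ and $\chi\Mod{q}$ expresses the Dirichlet series as $\sum_{\chi\Mod{q}}P_\chi(s)L(s+iA,\chi^{*})$ with each $P_\chi$ a Dirichlet polynomial supported on the divisors of $q$, hence in $\lsel_0$ by Theorem \ref{lselzero}. This gives part (1).

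For part (2), suppose $F\in\lselp_1(1)$. Then (P4) makes $\beta_n:=a_nn^{-iA}$ multiplicative, while part (1) gives $\beta_n=\sum_\chi c_\chi\chi^{*}(n)$ for $(n,q)=1$. A multiplicative function that is a linear combination of Dirichlet characters must be a single character---comparing $\beta_{mn}=\beta_m\beta_n$ for coprime $m,n$ forces every $c_\chi$ into $\{0,1\}$ with at most one nonzero---so $\beta_{p^k}=\chi(p)^k$ and $F_p(s)=(1-\chi_pp^{-s+iA})^{-1}$ for all $p\nmid q$. If moreover $F\in\gselp$, let $\chi^{*}$ be the primitive character inducing $\chi$; since $F$ and $L(s+iA,\chi^{*})$ then share Euler factors at every $p\nmid q$, the quotient $F(s)/L(s+iA,\chi^{*})$ is a degree-zero element carrying an Euler product with $\theta<1/2$, and the nonvanishing argument proving $\gselp_0=\{1\}$ in Theorem \ref{lselzero} forces it to be trivial. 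Hence $F(s)=L(s+iA,\chi^{*})$ for the primitive character $\chi^{*}$, as claimed.
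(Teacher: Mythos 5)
Your plan is in outline the paper's own proof (which adapts Soundararajan \cite{Sound05}, not \cite{CoGh93}), and several pieces are sound: the reduction to $A=0$ is legitimate and even a small improvement on the paper, since shifting $\mu_j\mapsto\mu_j+i\lambda_jA$ changes the Stirling invariant $A$ by $-Ad_F$, which vanishes exactly because $d_F=1$; the finite Fourier analysis giving \eqref{lincomb}, and the observation that a multiplicative, $q$-periodic function on $(n,q)=1$ must be a single Dirichlet character, both match the paper. But the central step is described by a mechanism that, as stated, would fail. You pair each dual coefficient $\bar a_m$ (at a saddle $t_0(m)\propto m$) with a \emph{single} direct index $n(m)$ and ask for ``consistency for all large $t$''; that produces self-referential relations between $a_{n(m)}$ and $\bar a_m$, not the periodicity relation $a_{n+q}n^{-iA}=a_nn^{-iA}$, and no relation between \emph{distinct} coefficients can emerge from a single test. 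The actual engine is a one-parameter family: one sets $\eff(\alpha,T)=\alpha^{-1/2}\int_{\alpha T}^{2\alpha T}F(1/2+it)e^{it\log\frac{t}{2\pi e\alpha}-i\pi/4}\,dt$ and computes $\eff(\alpha)=\lim_{T\to\infty}\eff(\alpha,T)/T^{1+iA}$ twice: via the functional equation it is supported on $\alpha\in\frac1q\Z$ with $q=C\pi Q^2$ (note the $\pi$, missing from your $Q^2C$) and value proportional to $\overline{a_{q\alpha}}\,\alpha^{iA}$ as in \eqref{falphaeqn}; directly, by stationary phase, it equals $\lim T^{-1-iA}\cdot 2\pi\sum_{T<2\pi n<2T}a_ne^{-2\pi in\alpha}$, which is manifestly $1$-periodic in $\alpha$. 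Comparing $\alpha$ with $\alpha+1$ forces $q\in\N$ and the periodicity. Moreover, the one genuinely new analytic difficulty here relative to \cite{Sound05} --- exactly what you defer as ``bookkeeping'' --- is that only the average bound \eqref{averageram} is available (Soundararajan assumes $a_n\ll n^{\varepsilon}$), so the endpoint stationary-phase errors $\varphi(T)=O(\sqrt T)$ for $2\pi n\in[T-T^{3/4},T+T^{3/4}]\cup[2T-T^{3/4},2T+T^{3/4}]$ cannot be controlled pointwise in $T$; the paper controls them only for $T$ outside a set of density zero in $\R$ and takes the limit along the complement. This exceptional-set device is the crux of the adaptation and is absent from your proposal.

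There is a second, smaller gap in the $\gselp$ endgame. To invoke $\gselp_0=\{1\}$ from Theorem \ref{lselzero} for $H(s)=F(s)/L(s+iA,\chi^{*})$ you must first verify that $H$ lies in $\gselp_0$, and (P2) --- finitely many poles --- is not automatic: (P4$'$) gives holomorphy and nonvanishing of $H$ only in $\re(s)>\theta$, while a priori $H$ could acquire a pole at each of the infinitely many trivial zeros of $L(s+iA,\chi^{*})$ along a left horizontal ray, unless those are zeros of $F$. Ruling this out needs the functional equation that $H$ inherits (a degree-$0$ gamma quotient, now possibly with $r'>0$) together with the $\gamma$-set density analysis from the proof of Theorem \ref{lselzero}, or the uniqueness statement of Theorem \ref{uniquefnaleqn}. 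To be fair, the paper itself compresses part (2) to a citation of \cite{Sound05}, so your sketch is not worse than the paper's on this point --- but your one-line assertion that the quotient ``is a degree-zero element'' of $\gselp$ skips precisely where the remaining work sits.
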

Kaczorowski and Perelli \cite{KaPe99} proved the theorem above for series in $\esel$.
Soundararajan gave another proof of their theorem in \cite{Sound05}, but assuming
$a_n\ll n^{\varepsilon}$. We present a modified version of Soundararajan's
proof below for class $\lsel_1(1)$. Note that the theorem above is valid for $\gsel_1\subset \lsel(1)$.
We hope to remove the restriction $\nu_0=1$ in future work. 

The hard part of proving the theorem above lies in proving the first assertion of the first part of the theorem.
The other assertions will follow relatively easily after that.
\begin{proof} 
Since $F(s)$ converges absolutely for
$\res>1$, we have the estimate
\begin{equation}\label{averageram}
\sum_{n<X}\frac{|a_n|}{\sqrt{n}}=O(X^{\frac{1}{2}+\varepsilon}).
\end{equation}
Define
\begin{equation}\label{falphat}
\eff(\alpha,T)=\frac{1}{\sqrt{\alpha}}\int_{\alpha T}^{2\alpha T}F(1/2+it)e^{it\log\frac{t}{2\pi e\alpha}-i\frac{\pi}{4}}dt. 
\end{equation}
The proof involves showing that 
\[
\eff(\alpha):=\lim_{T\to\infty}\frac{\eff(\alpha,T)}{T^{1+iA}}
\]
exists, and that $\eff(\alpha+1)=\eff(\alpha)$, 
that is, $\eff(\alpha)$ is periodic of period $1$.

Using the functional 
equation \eqref{fnaleqn} and equation \eqref{fourthgammaest}, we have
\[
\eff(\alpha,T)=\frac{\omega e^{iB}}{\sqrt{\alpha}}\int_{\alpha T}^{2\alpha T}
\tilde{F}(1/2-it)(C[\pi Q^2\alpha])^{-it}t^{iA}\left[1+O(1/t)\right]dt.
\]
Now we follow \cite{Sound05}, but we use Lemma \ref{basiclemma} for $\nu_0=1$ instead, to get
\begin{flalign}
\eff(\alpha,T)&=\frac{\omega e^{iB}}{\sqrt{\alpha}}\int_{\alpha T}^{2\alpha T}
\sum_{n=1}^{\infty}\frac{\overline{a_n}}{\sqrt{n}}e^{-n/X}(n^{-1}C[\pi Q^2\alpha])^{-it}t^{iA}
\left[1+O(1/t)\right]dt\nonumber\\ 
&+O(X^{\frac{1}{2}}e^{-\alpha T})+O(T^{2-\eta}X^{-1+\eta})\nonumber
\end{flalign}
\begin{flalign}\label{postbasic}
&=\frac{\omega e^{iB}}{\sqrt{\alpha}}\sum_{n=1}^{\infty}\frac{\overline{a_n}}{\sqrt{n}}e^{-n/X}
\int_{\alpha T}^{2\alpha T}(n^{-1}C[\pi Q^2\alpha])^{-it}t^{iA}\left[1+O(1/t)\right]dt\nonumber\\
&+O(X^{\frac{1}{2}}e^{-T})+O(T^{2-\eta}X^{-1+\eta}).
\end{flalign}
If we choose $X=T^{4/3}$, then both the error terms above are
$O(T^{2/3+\varepsilon})$ for any $\varepsilon>0$, if $\eta$ is chosen small enough.
It is easy to see that
\[
\int_{\alpha T}^{2\alpha T}(n^{-1}C[\pi Q^2\alpha])^{-it}t^{iA}\cdot O(1/t)dt=O(1).
\]
Using \eqref{averageram}, we see that the contribution of this integral to the sum in \eqref{postbasic}
is $O(X^{1/2+\varepsilon})$. With $X=T^{4/3}$ as above, we have
$O(X^{1/2+\varepsilon})=O(T^{2/3+\varepsilon})$, for any $\varepsilon>0$.
It remains to estimate 
\[
\frac{\omega e^{iB}}{\sqrt{\alpha}}\sum_{n=1}^{\infty}\frac{\overline{a_n}}{\sqrt{n}}e^{-n/X}
\int_{\alpha T}^{2\alpha T}(n^{-1}C[\pi Q^2\alpha])^{-it}t^{iA}dt.
\]
The integral is estimated using integration by parts (as in (4a) of of \cite{Sound05}). 
Together with the estimate \eqref{averageram},  we see that 
for $n^{-1}C[\pi Q^2\alpha]\ne 1$, the sum above is once again majorised 
by $O(X^{1/2+\varepsilon})=O(T^{2/3+\varepsilon})$ for any $\varepsilon>0$.
Thus, for $n^{-1}C[\pi Q^2\alpha]\ne 1$ we have 
\begin{equation}\label{alphanotint}
\eff(\alpha,T)=O(T^{\frac{2}{3}+\varepsilon})
\end{equation}
for some $\varepsilon>0$.
If $m^{-1}C[\pi Q^2\alpha]=1$, for some integer $m$, we get
\[
\int_{\alpha T}^{2\alpha T}(m^{-1}C[\pi Q^2\alpha])^{-it}t^{iA}dt=
\frac{2^{1+iA}-1}{1+iA}\alpha^{1+iA}T^{1+iA}.
\]
Combining \eqref{postbasic} with the estimates above, we see that for a suitable $\eta$, for any $\varepsilon>0$ 
we have
\begin{equation}\label{alphaint}
\eff(\alpha,T)=\omega e^{iB}
\frac{\overline{a_m}}{(C\pi Q^{2})^{\frac{1}{2}}}\frac{2^{1+iA}-1}{1+iA}
\alpha^{iA}T^{1+iA}+O(T^{\frac{2}{3}+\varepsilon}).
\end{equation}
Dividing by $T^{1+iA}$ in \eqref{alphanotint}
and taking the limit as $T\to\infty$, we see that $\eff(\alpha)=0$. 
Similarly, if we divide \eqref{alphaint} by $T^{1+iA}$ and take the limit as $T\to\infty$,
\[
\eff(\alpha)=\omega e^{iB}
\frac{\overline{a_m}\alpha^{iA}}{(C\pi Q^{2})^{\frac{1}{2}}}
\cdot\frac{2^{1+iA}-1}{1+iA}.
\]
Combining these two cases,
\begin{equation}\label{falphaeqn}
\eff(\alpha)=\omega e^{iB}\delta(C[\pi Q^2\alpha]=m)
\frac{\overline{a_{C[\pi Q^2\alpha]}}\alpha^{iA}}{(C\pi Q^{2})^{\frac{1}{2}}}
\cdot\frac{2^{1+iA}-1}{1+iA}.
\end{equation}
We have thus shown that the desired limit exists. 

We will now
show that $\eff(\alpha)$ is a periodic function. Once again, we use 
Lemma \ref{basiclemma} with $\nu_0=1$ and $X=T^{4/3}$. With the same arguments as before, we have
\begin{equation}\label{falphasum}
\eff(\alpha,T)=\frac{1}{\sqrt{\alpha}}\sum_{n=1}^{\infty}\frac{a_n}{\sqrt{n}}e^{-n/X}\cdot I_n+O(T^{\frac{2}{3}+\varepsilon}),
\end{equation}
for any $\varepsilon>0$, and where
\[
I_n=\int_{\alpha T}^{2\alpha T} 
e^{it\log\frac{t}{2\pi ne\alpha}-i\frac{\pi}{4}}dt.
\]
The integral $I_n$ is estimated by means of Lemmas 4.2, 4.6 and 4.4 of Titchmarsh \cite{Titch86} which
yield:
\begin{equation}\label{statphase}
\int_{\alpha T}^{2\alpha T} 
e^{it\log\frac{t}{2\pi ne\alpha}-i\frac{\pi}{4}}dt=\begin{cases} O(1)\,\,\text{if}\,\,2\pi n\ge 3T\\
2\pi \sqrt{n\alpha}e(-n\alpha)+\varphi(T)\,\,\text{if}\,\, T\le 2\pi n\le 2T,\,\,\text{and}\\
\varphi(T)\,\,\text{otherwise},\end{cases}
\end{equation}
where 
\[
\varphi(T)=O\left[T^{\frac{2}{5}}+\min \left(\sqrt{T},\frac{1}{\log (T/2\pi n)}\right)+\min\left(\sqrt{T},\frac{1}{\log (T/\pi n)}\right)\right].
\]
When $n\ge 3T$, using the first case of \eqref{statphase} together with \eqref{averageram} yields
\begin{equation}\label{bigt}
\frac{1}{\sqrt{\alpha}}\sum_{n\ge 3T}^{T^{1+\delta}}
\frac{a_n}{\sqrt{n}}e^{-n/X}\cdot I_n=O(T^{\frac{2}{3}+\varepsilon}).
\end{equation}
From this point onwards, we need to further modify the arguments of \cite{Sound05}.
If  $2\pi n$ lies in one of the intervals $P_{1,T}=[1,T-T^{3/4}]$, $P_{2,T}=[T+T^{3/4}, 2T-T^{3/4}]$ or 
$P_{3,T}=[2T+T^{3/4},3T]$, we see that $\varphi(T)=O(T^{2/5})$.
Hence, using \eqref{averageram} we see that 
\begin{equation}\label{smalltbigt}
\frac{1}{\sqrt{\alpha}}\sum_{2\pi n\in P_{j,T}}
\frac{a_n}{\sqrt{n}}e^{-n/X}\varphi(T)=O(T^{\frac{9}{10}+\varepsilon}),
\end{equation}
for $j\le i\le 3$.
When $2\pi n$ lies in either $Q_{1,T}=[T-T^{3/4},T+T^{3/4}]$ or in $Q_{2,T}=[2T-T^{3/4},2T+T^{3/4}]$, 
we have
\[
\min \left(\sqrt{T},\frac{1}{\log (T/2\pi n)}\right),
\min \left(\sqrt{T},\frac{1}{\log (T/\pi n)}\right)=O(\sqrt{T}).
\] 
It follows that $\varphi(T)=O(\sqrt{T})$ in these ranges. 
By \eqref{averageram}, we know that the sets of points 
$E_{j,\varepsilon}=\{U\in\R\,\vert\,\sum_{2\pi n\in Q_{j,U}}|a_n|>U^{\frac{3}{4}+\varepsilon}\}$,
$j=1,2$, have density zero as a subset of $\R$ for any $\varepsilon>0$, that
is, $\lim_{X\to\infty}\mu(E_{\varepsilon_j}\cap [0,X])/X=0$, where
$\mu$ is the Lebesgue measure on $\R$.
Thus, for all $T$ outside of a set $S_{\varepsilon}=E_{1,\varepsilon}\cup E_{2,\varepsilon}$
of density $0$ in $\R$, we have 
\begin{equation}\label{midt}
\frac{1}{\sqrt{\alpha}}\sum_{2\pi n\in Q_{j,T}}\frac{a_n}{\sqrt{n}}e^{-n/X}\varphi(T)
=O(T^{\frac{3}{4}+\varepsilon})
\end{equation}
for $j=1,2$. It follows that if $T\not\in S_{\varepsilon}$, \eqref{falphasum}
\eqref{bigt}, \eqref{smalltbigt} and \eqref{midt} show that
\[
\eff(\alpha,T)=2\pi\cdot
\sum_{T<2\pi n<2T}a_ne^{-2\pi in\alpha}+O(T^{\frac{9}{10}+\varepsilon})
\]
for any $\varepsilon>0$. Hence,
\[
\eff(\alpha)=\lim_{T\to\infty}\frac{\eff(\alpha,T)}{T^{1+iA}}=
\lim_{T\to\infty}\frac{1}{T^{1+iA}}\cdot 2\pi\cdot
\sum_{T<2\pi n<2T}a_ne^{-2\pi in\alpha}
\]
is periodic with period $1$, where the limit $T\to \infty$ is taken in 
$\R\setminus S_{\varepsilon}$. We substitute 
$\alpha +1$ in  \eqref{falphaeqn} to conclude that $C\pi Q^2=q$ must be a positive 
integer, and that $a_nn^{-iA}$ is periodic with period $q$. This proves the first assertion of the first 
part of the theorem. 

Once the periodicity of $a_nn^{-iA}$ has been established, the passage to 
the second assertion of the first part of the theorem made in \eqref{lincomb} is quite short and easy.
Since these arguments are identical to those of the proof of Theorem 8.1 of \cite{KaPe99},
we do not repeat them here. We note that the formulation in \cite{KaPe99} is
actually slightly sharper with a more precise description of the Dirichlet
polynomials $P_{\chi}$.

If we further assume that $F(s)$ satisfies (P4) or the stronger
(P4'), the second and third assertions of the theorem
follow follow almost immediately (see \cite{Sound05}, for instance).
\end{proof}

\section{Primitivity of cuspidal $L$-functions of $GL_2/\Q$}

Recall that an element $F(s)$ of $\lsel$ is called primitive if $F(s)=F_1(s)F_2(s)$
implies that either $F_1(s)$ or $F_2(s)$ is a unit. We say that an element
of $\lsel$ is almost primitive if $F(s)=F_1(s)F_2(s)$ implies that either $d_{F_1}=0$ or $d_{F_2}=0$. 
Using the third part of 
Theorem \ref{lselzero} together 
with the theorem above, we obtain the following corollary by induction on the degree.
\begin{corollary}\label{lselfactors} Every element of $\lsel$ (resp. $\gsel$) factors
into a product of primitive elements.
\end{corollary}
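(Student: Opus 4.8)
The plan is to deduce the corollary from the additivity of the degree together with the two facts that there are no elements of degree strictly between $0$ and $1$ (Theorem \ref{lselzerooneempty}) and that the degree-$0$ and degree-$1$ elements are completely described (Theorems \ref{lselzero} and \ref{mainthm}). First I would record the additivity of the degree: if $F=F_1F_2$ with $F_1,F_2\in\lsel$, then multiplying their functional equations produces a functional equation for $F$ whose gamma factor is the product of the two, so $d_F=d_{F_1}+d_{F_2}$ by the well-definedness proved in Theorem \ref{uniquefnaleqn}. Since the degree is a non-negative real number, any unit $u$ (with $uv=1$) satisfies $d_u=d_v=0$, so every unit lies in $\lsel_0$; by Theorem \ref{lselzero} these are Dirichlet polynomials, and the third assertion $\gselp_0=\{1\}$ shows that in the generic arithmetic setting the only unit is the constant $1$.

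The core of the argument is then a bound on the number of non-unit factors. By Proposition \ref{absconv} and Theorem \ref{lselzerooneempty} every element of positive degree has degree at least $1$, so in any factorization $F=F_1\cdots F_k$ into non-units the additivity of the degree forces the number of positive-degree factors to be at most $d_F$. When no non-unit of degree $0$ exists, as for $\gselp$ where $\gselp_0=\{1\}$, this already gives $k\le d_F$, and choosing a factorization with $k$ maximal produces a product of primitive elements: a factor admitting a further splitting into two non-units would yield a longer factorization, contradicting maximality. This is the classical Conrey--Ghosh induction, and Theorem \ref{mainthm} identifies the resulting degree-$1$ primitives as shifts of Dirichlet $L$-functions.

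The main obstacle, and the only place where the general classes $\lsel$ and $\gsel$ depart from this clean picture, is the presence of non-constant degree-$0$ elements, since $\lsel_0=\esel_0$ contains Dirichlet polynomials. A degree-$0$ element factors only within $\lsel_0$ (additivity forces both factors of a degree-$0$ element to have degree $0$), and $\lsel_0$ embeds in the polynomial ring in the variables $p^{-s}$, where the total polynomial degree is additive and strictly positive on non-units; hence each degree-$0$ element factors into finitely many degree-$0$ primitives. To terminate the factorization of a general $F$ I would bound the polynomial degrees of all degree-$0$ divisors of $F$: such a divisor is a self-dual Dirichlet polynomial whose conductor divides that of $F$, so its support, and hence its polynomial degree, is bounded in terms of $F$ alone. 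Combining this bound on the number of degree-$0$ factors with the bound $d_F$ on the number of positive-degree factors makes the total number of non-unit factors finite, and the maximal-factorization argument again yields a product of primitives. The delicate step is precisely this finiteness for the degree-$0$ part, namely that the admissible degree-$0$ divisors, which must themselves satisfy the functional equation cutting out $\lsel_0$, have bounded conductor, whereas the positive-degree part is controlled entirely by the gap $(0,1)$.
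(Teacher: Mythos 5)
Your overall skeleton is the same as the paper's: additivity of the degree (via Theorem \ref{uniquefnaleqn}), the gap $(0,1)$ from Theorem \ref{lselzerooneempty} to bound the number of positive-degree factors by $d_F$, and a separate treatment of the degree-zero part. Where the paper simply quotes \cite{KaPe03} for the factorization of $\lsel_0=\gsel_0=\esel_0$ into primitives, you substitute a self-contained argument: embed Dirichlet polynomials into the polynomial ring $\C[x_p\,:\,p \text{ prime}]$, $x_p=p^{-s}$, where total degree is additive and strictly positive on non-units, so any chain of proper factorizations inside $\lsel_0$ has bounded length and a maximal-length factorization consists of primitives. That replacement is correct and genuinely more elementary than the citation, modulo two assertions you use without proof: that the units of $\lsel$ are exactly the nonzero constants (easy: the inverse of a non-constant Dirichlet polynomial has infinitely many poles, violating (P2)), and that $d_F\ge 0$ for every $F\in\lsel$ --- this is not stated in the paper, but it does follow from the density argument inside the proof of Theorem \ref{lselzero}, where $G(s)$ is shown to have finitely many zeros, forcing $D(V_1)\ge D(V_2)$.

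The genuine gap is exactly at the step you yourself flag as delicate: bounding the degree-zero divisors of a positive-degree $F$, which is what makes the maximal-factorization argument terminate against chains $F=D_1H_1$, $H_1=D_2H_2,\dots$ with non-unit $D_i\in\lsel_0$. Your justification --- that such a divisor is ``a self-dual Dirichlet polynomial whose conductor divides that of $F$'' --- does not hold up. First, in $\lsel$ the functional equation is not unique: Theorem \ref{uniquefnaleqn} gives uniqueness of $Q^sG(s)$ only under (P3'), i.e.\ in $\lselp^{\infty}$, so ``the conductor of $F$'' is not even a well-defined invariant on the class the corollary concerns. Second, even granting a multiplicative conductor-type invariant with $q_F=q_D\,q_H$ for $F=DH$, the cofactor $H$ has positive degree and its $q_H$ is merely a positive real number, so divisibility makes no sense; to bound $q_D$ (hence the support and polynomial degree of $D$) you would need a uniform positive lower bound on $q_H$ over all divisors $H$ of $F$, which neither the paper nor anything it cites establishes at this level of generality. (The self-duality claim is also false in general: the functional equation relates $D$ to $\tilde D$ with an arbitrary root number $\omega$.) Knowing that each individual element of $\lsel_0$ factors into primitives --- your polynomial argument, or \cite{KaPe03} --- does not by itself preclude an infinite interleaved chain, so your proof does not close. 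For comparison, the paper's own proof is terse at precisely this point: it runs the induction so as to collect the degree-zero part into a single factor and then delegates all degree-zero difficulties to the factorization theory of \cite{KaPe03}, rather than committing to a conductor-divisibility statement that is unavailable in $\lsel$.
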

\begin{proof}
By using induction on the degree, we see from Theorem \ref{lselzerooneempty}
that factorization into into a product of primitive elements and elements of degree $0$
holds in $\gsel$ and $\lsel$. In \cite{KaPe03} the elements
of $\esel_0=\gsel_0=\lsel_0$, are shown to factorise into primitives, whence the 
proof.
\end{proof}
Since $\gselp_0=\lselp_0^{\rm{f}}=\{1\}$, an even easier version of the proof above gives
\begin{corollary}\label{lselpfactors}
Every element of $\gselp$ (resp. $\lselp^{\rm{f}}$) factors into a product of primitive elements in $\gselp$
(resp. $\lselp^{\rm{f}}$).
\end{corollary}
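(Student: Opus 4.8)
The plan is to run the same induction on degree that proved Corollary \ref{lselfactors}, but to exploit the fact that in $\gselp$ and in $\lselp^{\rm{f}}$ the degree-$0$ part is trivial. This triviality is precisely what lets us dispense with the appeal to the factorisation of $\esel_0$ from \cite{KaPe03} that was needed for $\gsel$ and $\lsel$, and it simultaneously makes the termination of the factorisation process automatic. The only inputs are the additivity of the degree, the triviality of the degree-$0$ part, and the gap theorem.

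First I would record the two structural facts that drive everything. In a product $F=F_1F_2$ the gamma factors multiply, $G=G_1G_2$, so the degree is additive, $d_F=d_{F_1}+d_{F_2}$; this uses that $d_F$ is well defined (Theorem \ref{uniquefnaleqn}). Since degrees are non-negative, additivity together with $\gselp_0=\{1\}$ (Theorem \ref{lselzero}) shows that the only unit of $\gselp$ is $1$: if $FG=1$ then $d_F+d_G=0$ forces $d_F=d_G=0$, whence $F,G\in\gselp_0=\{1\}$. Next, Theorem \ref{lselzerooneempty} asserts $\lsel_d=\emptyset$ for $0<d<1$, and since $\gselp\subset\lsel$ this applies verbatim; hence every non-unit of $\gselp$ has degree $d_F\ge 1$.

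With these in hand the factorisation is immediate. Let $F\in\gselp$ be a non-unit. In any expression $F=G_1\cdots G_k$ with each $G_i$ a non-unit of $\gselp$, additivity and the bound $d_{G_i}\ge 1$ give $k\le\sum_{i=1}^{k}d_{G_i}=d_F$, so the number of non-unit factors is bounded by $d_F$. I would then choose a factorisation into the maximal possible number of non-unit factors, which exists because the admissible values of $k$ form a bounded set of positive integers. In such a maximal factorisation every $G_i$ must be primitive: were some $G_i=G_i'G_i''$ with both factors non-units, replacing $G_i$ by the pair $G_i',G_i''$ would increase the number of factors, contradicting maximality. This exhibits $F$ as a product of primitive elements of $\gselp$. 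The identical argument gives the result for $\lselp^{\rm{f}}$, using $\lselp_0^{\rm{f}}=\{1\}$ in place of $\gselp_0=\{1\}$ and the inclusion $\lselp^{\rm{f}}\subset\lsel$.

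There is no serious obstacle here beyond the results already established; the proof is genuinely the ``even easier'' version promised. The one point that deserves care is that $d_F$ need not be an integer, so one cannot simply induct downward in integer steps as a naive reading of ``induction on the degree'' might suggest. The counting argument above circumvents this by bounding the number of non-unit factors directly. I would also remark, as a sanity check on the mechanism, that the gap theorem forces every element of degree exactly $1$ to be primitive, since any splitting into two non-units would contribute total degree at least $2$.
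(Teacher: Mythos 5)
Your proposal is correct and is essentially the paper's intended argument: the paper proves Corollary \ref{lselfactors} by induction on the degree using Theorem \ref{lselzerooneempty}, and for Corollary \ref{lselpfactors} simply notes that $\gselp_0=\lselp_0^{\rm{f}}=\{1\}$ makes the appeal to \cite{KaPe03} unnecessary, which is exactly what you exploit. Your maximal-factorisation counting argument is just a careful implementation of that same induction (and a sensible one, since degrees are real numbers rather than integers); the only tacit point common to both your proof and the paper's is that degrees of elements of $\lsel$ are non-negative, which follows from the finiteness-of-zeros argument in the proof of Theorem \ref{lselzero}.
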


Let $f$ be a cuspidal eigenform on the upper half-plane, and let $L(s,f)$ be its associated
$L$-function. Recall that $f$ is either a holomorphic cusp form or Maass cusp form 
(which is real analytic). In the language of representation theory, $L(s,f)$ is the $L$-function associated
to the cuspidal automorphic representation $\pi_f$ of $\gltwo(\aq)$ attached to $f$.
We will assume that $L(s,f)$ is normalised so that it satisfies a functional
equation of the form \eqref{fnaleqn}. 
It is well known that $L(s,f)$ lies in $\gselp$ - this result is classical. When
$f$ is a holomorphic form, Deligne's celebrated proof of the Ramanujan conjecture
shows that $L(s,f)$ lies in $\sel$. 
Using Theorem \ref{mainthm} we can prove
the following.
\begin{theorem}\label{gltwoprimitive} The function $L(s,f)$ is primitive in $\gselp$.
\end{theorem}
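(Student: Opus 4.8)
The plan is to combine the degree-$2$ structure of $L(s,f)$ with the classification results of this paper to reduce primitivity to excluding a single factorization type, and then to separate cusp forms from Eisenstein series by a second-moment (Rankin--Selberg) computation. First note that $d_{L(s,f)}=2$: for a holomorphic eigenform of weight $k$ the archimedean factor is a single $\Gamma(s+(k-1)/2)$ (so $\lambda=1$), while for a Maass eigenform it is a product of two factors with $\lambda=1/2$, and in both cases $2\sum\lambda_j=2$. Since the gamma factors multiply under multiplication of Dirichlet series, the degree is additive, so a factorization $L(s,f)=F_1F_2$ in $\gselp$ gives $d_{F_1}+d_{F_2}=2$ with $d_{F_i}\ge0$. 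By Theorem \ref{lselzerooneempty} no element of $\lsel$ has degree strictly between $0$ and $1$, so $(d_{F_1},d_{F_2})\in\{(0,2),(1,1),(2,0)\}$. If one factor has degree $0$ then it lies in $\gselp_0=\{1\}$ by Theorem \ref{lselzero} and is therefore the unit, which is consistent with primitivity; hence it suffices to rule out $d_{F_1}=d_{F_2}=1$.

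In that remaining case, Theorem \ref{mainthm}(2) applies to each degree-one factor $F_i\in\gselp$ and yields $F_i(s)=L(s+iA_i,\chi_i)$ for some $A_i\in\R$ and primitive Dirichlet characters $\chi_i\Mod{q_i}$, so that $L(s,f)=L(s+iA_1,\chi_1)L(s+iA_2,\chi_2)$ and the Hecke eigenvalues of $f$ are forced to be the convolution coefficients $a_n=\sum_{de=n}\chi_1(d)d^{-iA_1}\chi_2(e)e^{-iA_2}$; in other words the factorization would realise $L(s,f)$ as the $L$-function of an Eisenstein series. I will contradict this by computing in two ways the order of the pole at $s=1$ of the second-moment series $D(s)=\sum_{n\ge1}|a_n|^2n^{-s}$. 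On the one hand, since $L(s,f)$ is attached to a holomorphic or Maass \emph{cuspidal} eigenform, the classical Rankin--Selberg theory gives the meromorphic continuation of $D(s)$ near $s=1$ with a \emph{simple} pole there (equivalently $\sum_{n\le X}|a_n|^2\sim cX$ with $c>0$); for Maass forms this is unconditional and needs no Ramanujan bound. On the other hand, using $|\chi_i(p)|=1$ at the good primes, the convolution coefficients give the Euler-product identity
\[
D(s)=\frac{\zeta(s)^2}{\zeta(2s)}\,L\bigl(s+i(A_1-A_2),\chi_1\bar\chi_2\bigr)\,L\bigl(s-i(A_1-A_2),\bar\chi_1\chi_2\bigr)\cdot R(s),
\]
where $R(s)$ is a finite Euler product over the primes dividing $q_1q_2$ that is holomorphic and non-vanishing at $s=1$. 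The factor $\zeta(s)^2$ forces a pole of order \emph{at least two} at $s=1$ (the two Dirichlet $L$-factors and $\zeta(2s)$ being holomorphic and non-zero there, and contributing extra poles only in degenerate cases). A meromorphic function cannot have both a simple and a double pole at $s=1$, so $d_{F_1}=d_{F_2}=1$ is impossible and $L(s,f)$ is primitive.

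I expect the main obstacle to be making the two pole-order inputs precise: citing the Rankin--Selberg continuation for $f$ in a form valid uniformly for holomorphic and Maass eigenforms, and verifying that the shifts $A_i$ and the ramified Euler factors in $R(s)$ neither move nor cancel the $\zeta(s)^2$ pole at $s=1$. The conceptual content is that the hypothetical factorization turns $L(s,f)$ into an Eisenstein $L$-function, and that cuspidality is detected by the order of the pole of the Rankin--Selberg square, which is simple for a cusp form but at least double for any product of two Dirichlet $L$-functions, independently of the parameters $A_i$ and $\chi_i$. For holomorphic $f$ one can alternatively avoid Rankin--Selberg entirely and argue from Theorem \ref{uniquefnaleqn}, since the single archimedean factor $\Gamma(s+(k-1)/2)$ splits by Legendre duplication into two $\Gamma_{\mathbb{R}}$-factors with the two real half-integer shifts $(k\pm1)/2$, which the shifts $iA_i+\mathfrak a_i$ ($\mathfrak a_i\in\{0,1\}$) arising from two Dirichlet $L$-functions cannot reproduce.
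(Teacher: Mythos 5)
Your proof is correct, and while the reduction step is the same as the paper's, your endgame is genuinely different. Both arguments begin identically: degree additivity plus Theorem \ref{lselzerooneempty} leaves only the splittings $2=0+2$ and $2=1+1$, Theorem \ref{lselzero} ($\gselp_0=\{1\}$) disposes of degree-$0$ factors, and Theorem \ref{mainthm}(2) converts the $1+1$ case into the hypothetical identity $L(s,f)=L(s+iA_1,\chi_1)L(s+iA_2,\chi_2)$. The paper then rules this out by twisting: it applies $\bar\chi_1$ outside a finite set $S$ of bad primes, invokes Hecke's holomorphy of $L_S(s,f\times\bar\chi_1)$ on $\re(s)=1$, and notes that the right-hand side acquires a pole at $s=1-it_1$ from $L_S(s+it_1,\chi_1\bar\chi_1)$ which the non-vanishing factor $L_S(s+it_2,\chi_2\bar\chi_1)$ cannot cancel. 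You instead compare pole orders of $D(s)=\sum|a_n|^2n^{-s}$ at $s=1$: at most simple by Rankin--Selberg unfolding (unconditional for holomorphic and Maass forms, with residue proportional to $\|f\|^2$), but of order at least two from your Euler-product identity, whose verification is sound --- at good primes, with $\alpha=\chi_1(p)p^{-iA_1}$, $\beta=\chi_2(p)p^{-iA_2}$ of modulus one, the classical degree-two identity gives the local factor $(1-p^{-2s})(1-p^{-s})^{-2}(1-\alpha\bar\beta p^{-s})^{-1}(1-\bar\alpha\beta p^{-s})^{-1}$, and your ramified correction $R(s)$ is harmless since each local factor of $D$ has non-negative coefficients with constant term $1$, so $R(1)>0$, while degenerate choices ($\chi_1=\chi_2$, $A_1=A_2$) only raise the pole order. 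The trade-off: the paper's twisting input (Hecke holomorphy plus non-vanishing of Dirichlet $L$-functions on $\re(s)=1$) is lighter and is the same mechanism reused for Theorem \ref{zerothm}, whereas your positivity argument needs the Rankin--Selberg continuation but detects cuspidality more intrinsically and makes visible \emph{why} the Eisenstein-type factorization fails, uniformly in the parameters $A_i,\chi_i$. One caveat on your closing side remark: the purely archimedean alternative via Theorem \ref{uniquefnaleqn} fails for weight $k=1$, where the duplication formula turns $\Gamma(s)$ into shifts with real parts $0$ and $1/2$ --- exactly the shape produced by an even and an odd Dirichlet character with $A_1=A_2=0$ --- so that route cannot cover all holomorphic forms; your main Rankin--Selberg argument, which is weight-blind, is the one to keep.
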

\begin{proof}
By Theorem \ref{lselpfactors} we know that $L(s,f)$ must factor into a product of 
primitive elements. Because of Theorem \ref{lselzerooneempty} we know that only the
following two types of factorizations are possible. Either
\[
L(s,f)=F_0(s)F_2(s),
\]
with $F_0(s)$ of degree zero and $F_2(s)$ primitive of degree $2$, or
\[
L(s,f)=F_0(s)F_1(s)F_2(s),
\]
with $F_0(s)$ of degree $0$, and $F_1(s)$ and $F_2(s)$ both primitive of degree $1$. In either
case, we know that $F_0(s)=1$ by Theorem \ref{lselzero}.

In the first case there is nothing to prove. In the second case, the second 
part of Theorem \ref{mainthm} shows that $F_1(s)=L(s+it_1,\chi_1)$ and 
$F_2(s)=L(s+it_2,\chi_2)$ for Dirichlet characters $\chi_1$ and $\chi_2$ and
real numbers $t_1$ and $t_2$. Hence, we get
\begin{equation}\label{gltwofactors}
L(s,f)=L(s+it_1,\chi_1)L(s+it_2,\chi_2).
\end{equation}
This can be seen to be impossible as follows. For any Dirichlet series
$F(s)=\sum_{n=1}^{\infty}a_nn^{-s}$, we define the twist $F(s,\chi)$ of $F(s)$ by a 
Dirichlet character $\chi$ by 
\[
F(s,\chi)=\sum_{n=1}^{\infty}\frac{\chi(n)a_n}{n^s}.
\]
We denote the twist of $L(s,\chi_i)$ by $\chi$ by $L(s,\chi_i\chi)$ and the twist
of $L(s,f)$ by $\chi$ by $L(s,f\times\chi)$. One sees easily that if 
$F(s)=F_1(s)F_2(s)$ then $F(s,\chi)=F_1(s,\chi)F_2(s,\chi)$.
Let $S$ denote a finite set of places of $\Q$ containing all the primes
dividing the conductors of $f$, $\chi_1$ and $\chi_2$.
Twisting both sides of \eqref{gltwofactors} by $\bar{\chi}_1$ outside of $S$, we get
\[
L_S(s,f\times \bar{\chi}_1)=L_S(s+it_1,\chi_1\bar{\chi}_1)L_S(s+it_2,\chi_2\bar{\chi}_1),
\]
where $F_S(s)=\prod_{p\not\in S}F_p(s)$ for elements of $\gselp$.
The left-hand side is holomorphic on the line $\res=1$ by the classical work of
Hecke. On the right-hand side, 
$L_S(s+it_1,\chi_1\bar{\chi}_1)$ has a simple pole at $s=1-it_1$, while 
$L_S(s+it_2,\chi_2\bar{\chi}_1)$ is non-vanishing there (it may also have a simple
pole there if $\chi_1=\chi_2$). Thus the right-hand side is not holomorphic on the 
line $\res=1$, giving a contradiction.
\end{proof}
Since primitivity in $\gselp$ {\it a fortiori} implies primitivity in $\sel$, we can recover the
following result of Kaczorowski and Perelli as a corollary.
\begin{corollary} If $L(s,f)$ lies in $\sel$, it is primitive in $\sel$.
\end{corollary}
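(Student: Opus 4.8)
The plan is to obtain this as an immediate consequence of Theorem~\ref{gltwoprimitive}, exploiting the inclusion $\sel\subset\gselp$ recorded at the start of Section~\ref{motivationsec}. Since $L(s,f)$ lies in $\sel$ by hypothesis, it lies \emph{a fortiori} in $\gselp$, and hence is primitive in $\gselp$ by Theorem~\ref{gltwoprimitive}. The task is therefore only to transfer this primitivity down to the submonoid $\sel$, and the single genuine content is to check that the units of the ambient monoid behave well under restriction.

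First I would take an arbitrary factorization $L(s,f)=F_1(s)F_2(s)$ with $F_1,F_2\in\sel$. Because $\sel\subset\gselp$, this is simultaneously a factorization inside $\gselp$, so primitivity of $L(s,f)$ in $\gselp$ forces one factor, say $F_1$, to be a unit of $\gselp$. Next I would identify the units of $\gselp$: a unit $F_1$ has an inverse $G_1\in\gselp$ with $F_1G_1\equiv 1$, and since the degree is additive under multiplication and non-negative on these classes, both factors must have degree $0$, so $F_1\in\gselp_0$. By the third part of Theorem~\ref{lselzero} we have $\gselp_0=\{1\}$, whence $F_1\equiv 1$. As $1$ is also a unit of $\sel$, the given factorization is trivial, and therefore $L(s,f)$ is primitive in $\sel$.

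The only point that requires care, and the reason a bald ``a fortiori'' is not quite automatic, is that primitivity is defined relative to the units of the ambient monoid: in principle a factor could be a unit of the larger $\gselp$ without being a unit of the smaller $\sel$. This is precisely why one must identify the $\gselp$-units with $\{1\}$ before descending, and this is where the additivity together with the non-negativity of the degree, combined with $\gselp_0=\{1\}$, does the work. Beyond this bookkeeping there is no real obstacle: all of the analytic difficulty has already been expended in the proof of Theorem~\ref{gltwoprimitive} (via Theorem~\ref{mainthm} and the classical holomorphy of $L_S(s,f\times\bar\chi_1)$ on $\res=1$), and the corollary is purely the passage from the larger class $\gselp$ to its subclass $\sel$.
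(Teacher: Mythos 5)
Your proof is correct and takes essentially the same route as the paper, whose entire argument for this corollary is the one-line remark that primitivity in $\gselp$ \emph{a fortiori} implies primitivity in $\sel$ via the inclusion $\sel\subset\gselp$ and Theorem \ref{gltwoprimitive}. Your bookkeeping identifying the units of $\gselp$ with $\{1\}$ (degree additivity plus $\gselp_0=\{1\}$ from Theorem \ref{lselzero}) is precisely what the paper leaves implicit in ``a fortiori''; the only loose thread is your appeal to non-negativity of the degree, which is true but nowhere stated in the paper (since $r^{\prime}>0$ is allowed in $\gselp$ it is not immediate from the definition, though it follows by running the arguments of Proposition \ref{absconv} and Theorem \ref{lselzerooneempty} for $d<0$).
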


\section{Comparing zeros of $L$-functions} We return to a theme taken up first in
\cite{Ragh99} and more recently in \cite{Booker2015}.
Since the $L$-functions that are of interest in this section arise as
$L$-functions associated to automorphic representations, they come
naturally equipped with an Euler product. We will thus work in the 
class $\lselp(1)$.

If $L_1(s)\ne L_2(s)$ are elements in $\lselp(1)$ we would 
like to conclude (in many cases) that $L_1(s)/L_2(s)$ has infinitely many
poles, that is, that there are infinitely many zeros (counted with multiplicity) of 
$L_2(s)$ that are not zeros of $L_1(s)$. Our results in this paper
for $\lselp(1)$ allow us to consider several examples which 
were not covered by the results in \cite{Booker2015}.

As in the previous section $F(s,\chi)$ will denote the twist of the Dirichlet
series $F(s)$ by a Dirichlet character $\chi$, and 
$F_S(s)=\prod_{p\not\in S}F_p(s)$ for elements of $\lselp$ and $S$ a finite
set of primes.

\begin{theorem} \label{zerothm} Suppose $F_j(s)$ $j=1,2$ are elements of $\lselp(1)$ and
assume that $F_1(s)\ne F_2(s)$. Let $S$ be any finite set of primes and suppose that
\begin{enumerate}
\item $F_{1,S}(s,\chi)$ is holomorphic on $\re(s)=1$, and 
\item $F_{2,S}(s,\chi)$ is non-vanishing on $\re(s)=1$ 
\end{enumerate}
for every primitive Dirichlet character $\chi$.
If $F(s)=F_1(s)/F_2(s)$ with $d_F\in [0,1]$, then
$F(s)$ must have infinitely many poles.
\end{theorem}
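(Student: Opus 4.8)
The plan is to argue by contradiction: assume that $F(s)=F_1(s)/F_2(s)$ has only finitely many poles. Since $F_1$ and $F_2$ lie in $\lselp(1)$, both converge absolutely in $\re(s)>1$, and the Euler product of $F_2$ shows that $1/F_2$ converges absolutely there as well; hence $F$ is an ordinary Dirichlet series with abscissa of absolute convergence at most $1$. Taking the quotient of the two functional equations endows $F$ with a functional equation of the form \eqref{fnaleqn} with gamma factor $G_1/G_2$ (now genuinely allowing $r^{\prime}>0$, which is exactly the situation anticipated in Subsection \ref{rprime}) and degree $d_{F_1}-d_{F_2}=d_F$. Together with (P2), which permits the finitely many poles we have assumed, this places $F$ in $\lsel_{d_F}$, in fact in $\lsel_1(1)$ when $d_F=1$. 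I record the single consequence of the hypotheses that drives the proof: since twisting is multiplicative and compatible with the Euler products, $F_{1,S}(s,\chi)=F_S(s,\chi)F_{2,S}(s,\chi)$, so (1) and (2) together force $F_S(s,\chi)=F_{1,S}(s,\chi)/F_{2,S}(s,\chi)$ to be holomorphic on $\re(s)=1$ for every primitive $\chi$.

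Next I would split according to the value of $d_F$ and invoke the classification results of the preceding sections. If $0<d_F<1$, then $\lsel_{d_F}=\emptyset$ by Theorem \ref{lselzerooneempty} (consistent with Proposition \ref{absconv}, which would force such an $F$ to be entire), so this case cannot occur and $F$ must have infinitely many poles. If $d_F=0$, Theorem \ref{lselzero} identifies $F$ with a Dirichlet polynomial $\sum_{n\mid Q_1}a_nn^{-s}$, and a short separate argument, exploiting that the twisted factors $F_S(s,\chi)$ are then entire while the relation above is constrained by (1) and (2), is used to conclude that $F=1$, i.e. $F_1=F_2$, contrary to hypothesis. The substantive case is $d_F=1$, precisely the setting flagged in Subsection \ref{rprime}. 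Here Theorem \ref{mainthm} applies to $F\in\lsel_1(1)$ and yields a real number $A$, an integer $q>0$ with $a_nn^{-iA}$ periodic of period $q$, and the representation
\begin{equation*}
F(s)=\sum_{\chi\Mod{q}}P_{\chi}(s)L(s+iA,\chi^{*}).
\end{equation*}

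The contradiction in the case $d_F=1$ will come from producing a twist whose Euler product has a pole on $\re(s)=1$. Since $a_nn^{-iA}$ is a nonzero periodic function modulo $q$, its expansion over the Dirichlet characters modulo $q$ has at least one nonvanishing coefficient, say attached to $\chi_1$. Twisting by $\chi=\overline{\chi_1}$ makes the principal character appear in the corresponding summand, so that $F(s,\chi)$ acquires a genuine pole at $s=1-iA$, which lies on $\re(s)=1$. Passing to $F_S(s,\chi)$ removes only the finitely many Euler factors at $S$, and these are finite and nonzero at $s=1-iA$ (being exponentials of convergent series on $\re(s)=1$), so the pole survives. By the injectivity of $\chi\mapsto\chi^{*}$ on characters modulo $q$, only this one summand contributes to the residue, so the pole is not cancelled. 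This contradicts the holomorphy of $F_S(s,\chi)$ on $\re(s)=1$ established in the first paragraph, completing the proof.

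I expect the main obstacle to be exactly this last step: guaranteeing that the twist produces an \emph{uncancelled} pole on $\re(s)=1$. Working directly with the character expansion of the periodic coefficient sequence $a_nn^{-iA}$, rather than with the individual polynomials $P_{\chi}$ (which could vanish at $s=1-iA$), is what makes the residue demonstrably nonzero, while the injectivity of $\chi\mapsto\chi^{*}$ is what prevents several summands from cancelling. The degree-zero case is the other delicate point, since there $F$ has no poles at all and the contradiction cannot be read off from a pole; it must instead be extracted from the rigidity of elements of $\esel_0$ under the holomorphy and non-vanishing hypotheses.
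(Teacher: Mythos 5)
Your treatment of the main case $d_F=1$ is essentially sound, but it takes a heavier route than the paper. The paper invokes part (2) of Theorem \ref{mainthm} directly: since $F=F_1/F_2$ (assumed to have finitely many poles) lies in $\lselp_1(1)$, its Euler factors away from $q$ are those of a shifted Dirichlet $L$-function, so $F_{1,S}(s)=F_{2,S}(s)L_S(s+iA,\chi_0)$ with $S$ the primes dividing $q$; twisting by $\chi_0^{-1}$ puts $\zeta_S(s+iA)$ on the right, whose pole at $s=1-iA$ clashes at once with hypotheses (1) and (2). You instead use only part (1) and expand the periodic sequence $a_nn^{-iA}$ over Dirichlet characters. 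Two caveats there: a general $q$-periodic function is \emph{not} in the span of the characters mod $q$ (characters vanish when $(n,q)>1$), so the expansion is legitimate only after restricting to $(n,q)=1$ --- which is harmless precisely because you pass to $F_S$ and because the Euler product makes $a_n$ multiplicative with $a_1=1$; it is this normalisation, not the mere nonvanishing of the periodic sequence, that guarantees a nonzero Fourier coefficient $c_{\chi_1}$. With those repairs your orthogonality argument does produce an uncancelled simple pole of $F_S(s,\overline{\chi_1})$ on $\re(s)=1$, avoiding the possible vanishing of the $P_\chi$ exactly as you intended; but note you are in effect re-deriving part (2) of Theorem \ref{mainthm} from part (1), and that $\chi_1$ need not be primitive, so you should twist by its primitive inducer (harmless once the Euler factors in $S$ are removed). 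Both you and the paper are also brisk about why the quotient has abscissa of absolute convergence $\le 1$; your Euler-product justification is shaky if the $\theta$ in (P4) is large.

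The genuine gap is your degree-zero case. You defer to ``a short separate argument'' concluding that $F=1$; no such argument exists, because that conclusion does not follow from the hypotheses. Theorem \ref{lselzero} gives $\gselp_0=\{1\}$, but $\lselp_0$ is strictly larger: for example $F(s)=1+i\sqrt{p}\,p^{-s}$ satisfies \eqref{fnaleqn} with $Q=\sqrt{p}$, $G\equiv 1$, $\omega=i$, and satisfies (P4) with $\theta=1/2$ (its top coefficient has modulus $\sqrt{p}$, consistent with the Kaczorowski--Perelli description of degree-zero elements, though it violates (P4')). Taking $F_2=L(s,f)$ for a cuspidal eigenform and $F_1=F_2F$, hypotheses (1) and (2) hold for every $S$ and every primitive $\chi$ (twisted cuspidal $L$-functions are entire and nonvanishing on $\re(s)=1$), yet $F$ is entire and $F\ne 1$. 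Concretely, once $S$ contains the finitely many primes supporting $F$, one has $F_S\equiv 1$ and your relation $F_{1,S}(s,\chi)=F_S(s,\chi)F_{2,S}(s,\chi)$ becomes inert: no pole can be manufactured, and no rigidity forces $F=1$. The paper's route for $d_F=0$ is different: it multiplies by a Dirichlet $L$-function, setting $J(s)=F(s)L(s,\chi)$ with $L(s,\chi)\ne F_2(s)$, so that $J$ has degree one, and reruns the degree-one proof on $J$. That lifting device is what your proposal is missing (and even it needs care in the residual case where the classifying character for $J$ coincides with $\chi$ and $A=0$). Your instinct that degree zero is ``the other delicate point'' was correct, but the resolution you sketch is unavailable in $\lselp$.
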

\begin{proof} The arguments are similar to those in \cite{Ragh99} but
we now have the more powerful Theorem \ref{mainthm}.
If $F(s)$ has only finitely many poles, it must lie in $\lsel(1)$, and, in fact,
in $\lselp(1)$. It follows from Theorem \ref{mainthm} that the coefficients of
$F(s)$ are periodic with some period $q\in \N$. We let $S$ be the set of primes
dividing $q$. If $d_F=1$, we know by the second 
assertion of Theorem \ref{mainthm} that 
\[
F_{1,S}(s)=F_{2,S}(s)L_S(s+iA,\chi_0)
\]
for some Dirichlet character $\chi_0\Mod{q}$. 
We can assume that $S$ includes all the places where
$\chi_0$ is ramified, since the equality above holds for any larger set of primes containing
$S$. Twisting both sides of the equation above by $\chi_0^{-1}$, we see that
\[
F_{1,S}(s,\chi_0^{-1})=F_{2,S}(s,\chi_0^{-1})\zeta_S(s+iA).
\]

But $\zeta_S(s+iA)$ has a simple pole at $s=1-iA$, while  
$F_2(s,\chi_0^{-1})$, and hence, $F_{2,S}(s,\chi_0^{-1})$ is
non-vanishing there (by hypothesis). Thus, the right-hand side of the equation 
above has a simple pole at $s=1-iA$. On the other hand,
$F_1(s,\chi_0^{-1})$, and hence,
$F_{1,S}(s,\chi_0^{-1})$, is holomorphic at $s=1-iA$, yielding a contradiction.

If $d_F=0$, we apply our proof above to $J(s)=F(s)L(s,\chi)$, 
where $\chi$ is a primitive Dirichlet character such that $L(s,\chi)\ne F_2(s)$.
Thus $J(s)$ has degree $1$ and satisfies all the hypothesis of theorem, so 
by our proof above we get the stronger result that $J(s)$ has infinitely many poles.

If $0<d_F<1$, it follows immediately from Theorem \ref{lselzerooneempty} that the result is
vacuously true.
\end{proof}
\begin{remark} \label{criticalstrip}  Since we have absolute convergence in $\res>1$ for both $F_1(s)$ and
$F_2(s)$ and non-vanishing on $\res=1$ for $F_2(s)$, we see that the infinitely many 
poles of $F(s)$ lie in the critical strip. Thus, the infinitely many poles of 
$F(s)$ do not arise because of the trivial zeros of $F_2(s)$.
\end{remark}
\begin{remark}
The proof shows that we do not actually require that $F_i(s)$, $i=1,2$, individually belong to $\lselp$. We require
only that the quotient does. Thus, the theorem above applies to Artin $L$-functions
which have not yet been proven to lie in $\lselp$, but for which the functional equations
and relevant holomorphy and non-vanishing results for character twists are known.
\end{remark}
\begin{remark} The proof shows that we require the holomorphy and the non-vanishing only
for the {\it incomplete} twisted $L$-functions. This is usually easier to obtain
in practice. In fact, it is enough to show these properties for a fixed finite set $S$ which
contains all the primes dividing $N_1/N_2$, where the $N_j$ are the conductors of the 
$F_j(s)$, $j=1,2$.
\end{remark} 
\begin{remark} In \cite{Ragh99}, a similar theorem was proved for $d_F=0,1$, but essentially assuming
that the gamma factors at infinity were the same for $F_1(s)$ and $F_2(s)$, since the only 
classification theorems available at the time assumed $r^{\prime}=0$. This was of course
a strong restriction. There were also strong restrictions on the conductors.
\end{remark}
We apply our theorem to the following pair of functions. 
Let $\pi_i$, $1\le i\le 4$ be (unitary) cuspidal automorphic representations of 
$\glniak$ respectively. 
We take the tensor product $L$-functions $F_1(s)=L(s,\pi_1\times\pi_2)$ 
and $F_2(s)=L(s,\pi_3\times\pi_4)$. A series of papers due to
Jacquet-Piatetski-Shapiro-Shalika \cite{JaSh811,JaSh812,JPSS83}, as well as 
Shahidi \cite{Shahidi81,Shahidi88, Shahidi90} and Moeglin-Waldspurger \cite{MoWa89}
show that $F_j(s)\in \lsel(1)$, $j=1,2$, while the relevant non-vanishing statements for 
character twists are due to Shahidi \cite{Shahidi81}. The boundedness of the 
L-functions in vertical strips was proved in \cite{GeSh01}.
It follows that $F_j(s)$, $j=1,2$ satisfy all 
the conditions of the theorem.

One expects that if $\pi_i$ and $\pi_j$ ($1\le i,j\le 2$) are all 
distinct, then $F_1(s)\ne F_2(s)$ almost always. However, there are exceptions, 
and in practice, it is extremely
difficult to rule out the possibility that $F_1(s)=F_2(s)$. If $\pi_1\simeq\pi_3$ and
$L(s,\pi_2)\ne L(s,\pi_4)$, then we can show that $F_1(s)\ne F_2(s)$. 
Let us now assume further that $n_2=n_4$. In this case the quotient 
$F(s)$ has degree $0$ and satisfies all the hypotheses of Theorem \ref{zerothm}. It 
follows that $F(s)$ has infinitely many poles, that is, there are infinitely many zeros
(counted with multiplicty) of $L(s,\pi_1\times\pi_4)$ which are not zeros of 
$L(s,\pi_1\times\pi_2)$. In view of Remark \ref{criticalstrip}, these poles lie in
the critical strip. When $\pi_1$ and $\pi_3$
are chosen to the the trivial representation of $\glone(\ak)$, we recover the
theorem for the standard $L$-functions of $\glnak$. We can thus record the 
following corollary to Theorem \ref{zerothm}.
\begin{corollary} Let $\pi_1$, $\pi_2$ and $\pi_4$ be (unitary) cuspidal automorphic
representations of $\glniak$ for $i=1,2,4$. Assume that $n_2=n_4$ and that 
$L(s,\pi_2)\ne L(s,\pi_4)$. Then $L(s,\pi_1\times\pi_2)/L(s,\pi_1\times \pi_4)$
has infinitely many poles in the critical strip $0<\res<1$. 
\end{corollary}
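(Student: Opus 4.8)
The plan is to recognise this as the special case $\pi_3=\pi_1$ of the situation analysed in the paragraph immediately preceding the statement, and then to invoke Theorem \ref{zerothm} directly. I would set $F_1(s)=L(s,\pi_1\times\pi_2)$ and $F_2(s)=L(s,\pi_1\times\pi_4)$, so that $F(s)=F_1(s)/F_2(s)$ is exactly the quotient in the statement. First I would record that $F_1,F_2\in\lselp(1)$: the functional equation, the bound $\nu_0=1$ and the Euler product all follow from the works of Jacquet--Piatetski-Shapiro--Shalika, Shahidi and Moeglin--Waldspurger cited above, while the holomorphy of the incomplete twists $F_{1,S}(s,\chi)$ on $\re(s)=1$ (hypothesis (1)) and the non-vanishing of the twists $F_{2,S}(s,\chi)$ on $\re(s)=1$ (hypothesis (2)) are precisely the analytic inputs assembled there, the latter being Shahidi's non-vanishing theorem.

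Next I would check the two remaining conditions needed to apply Theorem \ref{zerothm}. Since $n_2=n_4$, the gamma factors of $F_1$ and $F_2$ have equal degree, so $d_F=d_{F_1}-d_{F_2}=0\in[0,1]$. It then only remains to verify that $F_1\ne F_2$. Here I would argue from the hypothesis $L(s,\pi_2)\ne L(s,\pi_4)$: by strong multiplicity one this forces $\pi_2\not\cong\pi_4$. Comparing the unramified Euler factors of $L(s,\pi_1\times\pi_2)$ and $L(s,\pi_1\times\pi_4)$, whose local parameters are the products $\alpha_{1,i}(p)\beta_j(p)$ of the Satake parameters of $\pi_1$ with those of the second factor, an equality $F_1=F_2$ would give $a_{\pi_1}(p^m)\big(a_{\pi_2}(p^m)-a_{\pi_4}(p^m)\big)=0$ for every unramified $p$ and every $m\ge1$, and hence would return the equality $L(s,\pi_2)=L(s,\pi_4)$, a contradiction. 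With $F_1\ne F_2$ and $d_F=0$ established, Theorem \ref{zerothm} applies and shows that $F(s)$ has infinitely many poles; the internal reduction of the degree-zero case to a degree-one statement (multiplying $F_1$ by a suitably chosen $L(s,\chi)$) is carried out inside the proof of that theorem, so I may simply cite it.

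Finally, to confine the poles to the open critical strip I would quote Remark \ref{criticalstrip}: since $F_1$ and $F_2$ both converge absolutely in $\re(s)>1$ and $F_2$ together with its twists is non-vanishing on $\re(s)=1$, the quotient $F(s)$ is holomorphic and non-zero for $\re(s)\ge1$, and by the functional equation its poles cannot arise from trivial zeros of $F_2(s)$ to the left. Hence the infinitely many poles produced above all lie in $0<\re(s)<1$, which is the assertion to be proved. Specialising $\pi_1$ to the trivial representation of $\glone(\ak)$ recovers the corresponding statement for standard $L$-functions, so no separate argument is needed there.

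I expect the one genuinely delicate point to be the verification that $F_1\ne F_2$. The Euler-factor comparison yields agreement of $a_{\pi_2}(p^m)$ and $a_{\pi_4}(p^m)$ only on the places where the factor $a_{\pi_1}(p^m)$ is non-zero, so one must rule out the degenerate possibility that these factors vanish on a set of places large enough to defeat strong multiplicity one (for instance, the density $1/2$ of vanishing Hecke eigenvalues attached to a CM form). The remedy is to exploit the full family of power sums $m\ge1$ rather than $m=1$ alone: the parameters $\alpha_{1,i}(p)$ cannot all have vanishing $m$-th power sums for every $m$ on a set of places that is too large, which restores agreement of the parameters of $\pi_2$ and $\pi_4$ at almost every place and lets strong multiplicity one conclude. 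Everything else is routine bookkeeping, and the heart of the argument remains Theorem \ref{zerothm}.
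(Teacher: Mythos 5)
Your write-up follows the paper's own route step for step: the same choice $F_1(s)=L(s,\pi_1\times\pi_2)$, $F_2(s)=L(s,\pi_1\times\pi_4)$, membership in $\lselp(1)$ via Jacquet--Piatetski-Shapiro--Shalika, Shahidi and Moeglin--Waldspurger, holomorphy and non-vanishing of the incomplete twists via Hecke/Shahidi theory, $d_F=0$ from $n_2=n_4$, then Theorem \ref{zerothm} and Remark \ref{criticalstrip}. The one place where you attempt more than the paper is the verification that $F_1\ne F_2$: the paper merely asserts that this ``can be shown'' from $\pi_1\simeq\pi_3$ and $L(s,\pi_2)\ne L(s,\pi_4)$, with no argument given. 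You correctly single this out as the delicate point, but the patch you propose does not work, and this is a genuine gap.

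Concretely, from $F_1=F_2$ you obtain $a_{\pi_1}(p^m)\bigl(a_{\pi_2}(p^m)-a_{\pi_4}(p^m)\bigr)=0$ for unramified $p$ and all $m\ge 1$, and your remedy --- ranging over all $m$ --- only yields $a_{\pi_2}(p^m)=a_{\pi_4}(p^m)$ for those $m$ at which $a_{\pi_1}(p^m)\ne 0$. If $\pi_1$ is dihedral with self-twist character $\eta$, then at the density-$1/2$ set of primes where the Satake parameters of $\pi_{1,p}$ have the form $\{\alpha,-\alpha\}$ one has $a_{\pi_1}(p^m)=0$ for \emph{every odd} $m$; the even power sums then only tell you that the multisets of \emph{squared} parameters of $\pi_{2,p}$ and $\pi_{4,p}$ coincide, which does not determine the local factors (already $\{1\}$ versus $\{-1\}$ defeats it). Worse, no refinement of the power-sum argument can close this gap, because the implication you are trying to prove is false without extra hypotheses: take $\pi_1$ a dihedral cuspidal representation of ${\rm GL}_2(\ak)$ with $\pi_1\otimes\eta\cong\pi_1$ for a quadratic Hecke character $\eta$, and take $n_2=n_4=1$ with $\pi_2$ the trivial character and $\pi_4=\eta$. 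Then $L(s,\pi_2)=\zeta_K(s)\ne L(s,\eta)=L(s,\pi_4)$, and yet
\[
L(s,\pi_1\times\pi_4)=L\bigl(s,(\pi_1\otimes\eta)\times\pi_2\bigr)=L(s,\pi_1\times\pi_2),
\]
so $F(s)\equiv 1$ has no poles at all. (The same self-twist trick with $\pi_4=\pi_2\otimes\eta$ works for any $n_2$.) Thus the step $F_1\ne F_2$ requires an additional hypothesis --- for instance that $\pi_1$ admits no nontrivial self-twist, or that $\pi_4\not\cong\pi_2\otimes\eta$ for every $\eta$ with $\pi_1\otimes\eta\cong\pi_1$ --- and neither your argument nor the paper's unproved assertion supplies one. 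Everything else in your proposal is sound and matches the paper, but this step, which you rightly identified as the heart of the matter, cannot be repaired as you suggest.
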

The point about the example in this corollary is that the functions $F_j(s)$, $j=1,2$ are not known
to belong to $\bselp$, and thus Theorem 1.7 of \cite{Booker2015} could not have been applied in this case. 
We give two more examples below outside the purview of Booker's results.

Let $\pi$ be a (unitary) cuspidal automorphic representation of $\glnak$. The work of Shahidi
and Kim-Shahidi (see \cite{Shahidi81,Shahidi88, Shahidi90, Shahidi97} and \cite{Kim99}) shows that  the 
symmetric and exterior square $L$-functions, 
$L(s,\vee^2(\pi))$ and $L(s,\wedge^2 (\pi))$, lie in 
$\lselp(1)$. The relevant holomorphy and non-vanishing results for twists are also known by 
\cite{Shahidi97}, and the boundedness in vertical strips by \cite{GeSh01}, 
so our theorem applies to quotients of these $L$-functions (and
quotients of products of these $L$-functions) as well. 
Again, these $L$-functions are not known to lie in $\bselp$ and thus give more
examples where Theorem \ref{zerothm} where yields new results.

\bibliographystyle{alpha}
\bibliography{../../../../Bibtex/master2020}

\end{document}